\newtheorem{theorem}{Theorem}[section]
\newtheorem{proposition}[theorem]{Proposition}
\newtheorem{lemma}[theorem]{Lemma}
\newtheorem{corollary}[theorem]{Corollary}
\newtheorem{problem}[theorem]{Problem}
\newtheorem*{theorem*}{Theorem}
\newtheorem*{lemma*}{Lemma}
\newtheorem*{proposition*}{Proposition}
\newtheorem*{conjecture*}{Conjecture}
\newtheorem*{corollary*}{Corollary}
\newtheorem*{problem*}{Problem}
\theoremstyle{remark}
\newtheorem{remark}[theorem]{Remark}
\newtheorem{notation}[theorem]{Notation}
\newtheorem{definition}[theorem]{Definition}
\newcommand{\PP}{\mathbb{P}}
\DeclareMathOperator{\Gr}{Gr}
\DeclareMathOperator{\Span}{Span}
\DeclareMathOperator{\PGL}{\mathbb{P}GL}
\begin{document}

\title{$\PGL$ orbits on products of flag varieties}
\author[I. Coskun]{Izzet Coskun}
\address{Department of Mathematics, Stat. and CS \\University of Illinois at Chicago, Chicago, IL 60607}
\email{icoskun@uic.edu}
\author[A. G\"und\"uz]{Abuzer G\"und\"uz}
\address{Sakarya University, Mathematics Department, Esentepe Campus, Serdivan, Sakarya 54050 T\"{u}rkiye}
\email{abuzergunduz@sakarya.edu.tr}

\subjclass[2010]{Primary: 14L30, 14M15, 14M17. Secondary: 14L35, 51N30}
\keywords{Flag varieties, $\PGL(n)$-actions, dense orbits}
\thanks{During the preparation of this article the first author was partially supported by the NSF grant DMS-2200684, the second author was supported by T\"{U}BITAK 2219 grant}
\begin{abstract}
In this paper, we study the existence of a dense orbit for the diagonal $\PGL(n)$ action on self-products of partial flag varieties. We determine when there exists a dense orbit for flag varieties of the form $F(k_1, \dots, k_r; n)^m$   when $k_i = i$ or when $m=3$ and $k_i = \ell + i$ for some $\ell \geq 0$. We also show that certain infinite families of products do not have a  dense orbit.
\end{abstract}
\maketitle

\section{Introduction}
The problem of finding convenient coordinates for a collection of linear spaces is as old as coordinate geometry. The existence of easy to work with coordinates is often at the heart of many classical theorems. The prototypical result states that given two ordered sets $\{p_1, \dots, p_{n+2}\}$ and $\{q_1, \dots, q_{n+2}\}$ of $n+2$ points in $\PP^n$ in general linear position, there exists a unique  $M \in \PGL(n+1)$ such that $M(p_i) = q_i$ for $1 \leq i \leq n+2$ (see \cite[Section 1.6]{harris}). Consequently, the diagonal action of $\PGL(n+1)$ on $\prod_{i=1}^m\PP^n$ has a dense orbit if and only if $m \leq n+2$. In this paper, we consider generalizations of this fact to self-products of partial flag varieties.

Let $0 < k_1 < \cdots < k_r <n$ be positive integers.  For notational convenience, set $k_0=0$ and $k_{r+1}=n$.  Let $V$ be an $n$-dimensional vector space. Let $F(k_1, \dots, k_r; n)$ denote the partial flag variety parameterizing the set of partial flags $$W_{k_1} \subset  \cdots \subset W_{k_r} \subset V,$$ where $\dim (W_{k_i}) = k_i$. The group $\PGL(n)$ acts transitively on $F(k_1, \dots, k_r; n)$. This action induces a diagonal action on  products of the form $\prod_{i=1}^m F(k_{i, 1} , \dots, k_{i, r_i}; n)$. We say that the action is {\em dense} if there exists a dense orbit of the action. Otherwise, we say the action is {\em sparse}. In this paper, we address special cases of the following problem.

\begin{problem}
Classify products $F(k_1, \dots, k_r; n)^m$ for which $\PGL(n)$ acts with dense orbit.
\end{problem}

Our results on density can be  summarized in the following theorems. We first fully resolve the case when $k_i = i$ for $1 \leq i \leq r$.

\begin{theorem*} [Theorem \ref{thm-1r}]
The $\PGL(n)$ action on $F(1, 2, \dots, r-1, r; n)^m$ is dense if and only if one of the following holds:
\begin{enumerate}
\item $m \leq 2$; or 
\item $r=1$ and $m \leq n+1$; or
\item $m=3$, $r>1$, and $n \geq 3r-2$; or
\item $m\geq 4$, $r>1$, and $n \geq 3r-1$.
\end{enumerate}
\end{theorem*}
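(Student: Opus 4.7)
The plan is to reduce everything to the common stabilizer $Q\subseteq\PGL(n)$ of a pair of flags placed in opposite standard position, and to analyze its action on the remaining flags.

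For the ``only if'' direction, the basic tool is the dimension inequality
\[
m\bigl(rn - \tfrac{r(r+1)}{2}\bigr) \;\le\; n^2 - 1,
\]
which is necessary for density and can be checked to fail directly in many of the sparse ranges. For borderline cases where this inequality is tight or even satisfied but density still fails, I would construct an explicit continuous invariant of the orbit---for instance a cross-ratio built from intersections of corresponding Schubert cells, or a rational map from $F(1,\dots,r;n)^m$ exhibiting a positive-dimensional modulus that varies continuously with the configuration.

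For the ``if'' direction, I would start by bringing the first two flags into opposite standard position
\[
W^{(1)}_i=\langle e_1,\dots,e_i\rangle, \qquad W^{(2)}_i=\langle e_{n-i+1},\dots,e_n\rangle,
\]
using the classical two-flag theorem. Denote by $Q\subseteq\PGL(n)$ the common stabilizer of this pair; its matrices are supported on an explicit staircase-shaped pattern in the chosen basis, given by the intersection of the two parabolics stabilizing $F_1$ and $F_2$ respectively. The remaining problem is to show that $Q$ acts with a dense orbit on $F(1,\dots,r;n)^{m-2}$, which is what is needed for cases $(3)$ and $(4)$; cases $(1)$ and $(2)$ are immediate from this reduction together with the classical projective simplex argument in $\PP^{n-1}$.

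For case $(3)$, I would normalize a generic third flag $F_3=(U_1\subset\cdots\subset U_r)$ under $Q$. Picking a basis $u_1,\dots,u_r$ with $U_i=\langle u_1,\dots,u_i\rangle$, the stabilizer of $F_3$ inside $Q$ can be analyzed row-by-row in the matrix presentation: each row of $M\in Q$ contributes a system of linear constraints coming from the incidences $Mu_j\in U_j$, with some rows being over-determined when the number of allowed column entries is too small. The threshold $n\ge 3r-2$ is precisely where the resulting systems become generically independent, so that the generic stabilizer attains its expected dimension and $Q$ acts with a dense orbit on $F(1,\dots,r;n)$. For case $(4)$, I would iterate: after normalizing $(F_1,F_2,F_3)$, compute the residual stabilizer $H\subseteq Q$ in the same block formalism, and verify that for $n\ge 3r-1$ the group $H$ still acts with a dense orbit on $F(1,\dots,r;n)$, allowing normalization of a fourth flag; a short induction on $m$ then completes the argument.

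The main technical obstacle is exactly the rank analysis of the constraint matrices at each stage. Naive dimension counting overestimates how many equations are independent because the over-determined rows can produce hidden linear dependencies (analogous to the vanishing of $\det(w_i\, v_i\, w_i)_{i=1,2,3}$ that already appears in the smallest sparse cases), and the sharp thresholds $3r-2$ and $3r-1$ emerge only after a careful accounting of these. Verifying generic independence---or, when it fails, extracting the hidden invariants that obstruct density---is the technical heart of the argument, and it is where the specific combinatorics of partial flag varieties (as opposed to general homogeneous spaces) plays an essential role.
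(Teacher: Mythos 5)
There is a genuine gap on both directions. For the sparse (``only if'') direction, your two tools are the dimension inequality and an unconstructed ``continuous invariant,'' and neither covers the cases that actually need work: not every sparse case here is trivially sparse. For example, for $m=3$, $r=4$, $n=9$ one has $\dim F(1,2,3,4;9)^3=78\leq 80=n^2-1$, yet the action is sparse; your inequality is silent, and the proposed cross-ratio/modulus invariant is never exhibited, so nothing is proved in exactly the borderline range $(m-1)r<n<mr$ (and part of $n\leq (m-1)r$). The paper disposes of these cases not by invariants but by the reduction lemma (Lemma \ref{lem-reduce}), applied $m$ times, giving $F(1,\dots,r;n)^m\Leftrightarrow F(1,\dots,j;(m-1)j)^m$ with $j=n-(m-1)r$, after which the smaller product is trivially sparse (or dense when $j\leq 1$, resp.\ $j\leq 2$ for $m=3$, the latter using the known density of $F(1,2;4)^3$ from Theorem \ref{thm-three2step}). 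Without that lemma or an explicit invariant your only-if direction has a hole.

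For the dense (``if'') direction, the plan of normalizing two opposite flags and analyzing the residual group $Q$ is legitimate in principle, but you explicitly defer the rank/independence analysis that would produce the thresholds, so $3r-2$ is asserted rather than derived; and where the proposal is specific it is wrong: the claimed threshold $n\geq 3r-1$ for all $m\geq 4$, independent of $m$, is false (it reproduces a typo in the paper's introduction). Already for $m=5$, $r=2$, $n=5$ one has $\dim F(1,2;5)^5=35>24=n^2-1$, so the action is trivially sparse; the bound actually proved in the body of the paper is $n\geq mr-1$, which grows with $m$. Accordingly your ``short induction on $m$'' with a fixed residual-density claim cannot close: each normalized flag shrinks the residual stabilizer, so the hypothesis needed must strengthen with $m$, contradicting an $m$-independent threshold (as your own dimension inequality shows). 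The paper's route avoids the stabilizer computation entirely: $n\geq mr$ is dense by Lemma \ref{lem-easydense}, the range $r+1\leq n\leq(m-1)r$ is trivially sparse except for $F(1,2;4)^3$, and the intermediate range is settled by the reduction lemma as above; if you want to salvage your approach, you must both carry out the constraint-rank analysis and replace the $m$-independent threshold in case (4) by $n\geq mr-1$.
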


As a natural generalization, we consider the case when $k_i = \ell + i$ for $1 \leq i \leq r$. We obtain a complete classification when $m=3$ or when $m$ is arbitrary and $n \geq (m-1)(\ell + r)$. 

\begin{theorem*}[Theorem \ref{thm-1apart}]
The $\PGL(n)$ action on the product $F(\ell + 1, \dots, \ell +r; n)^3$ is sparse if $2\ell + r < n < 2\ell + 2r$. Assume that $n \geq 2 \ell + 2r$. Then the $\PGL(n)$ action on $F(\ell + 1, \dots, \ell +r; n)^3$  is dense if and only if one of the following holds:
\begin{enumerate}
\item $r=1$; or
\item $n \geq 3\ell + 3r$ or $n = 3\ell + 3r -2$; or 
\item $2\ell + 2r \leq n \leq 3\ell + 2r$ and either $r=2$ or $2n \geq 3\ell + 3r -2$.
\end{enumerate}
\end{theorem*}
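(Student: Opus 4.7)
My plan is to reduce density of the $\PGL(n)$-action on $F^3$ to density of the induced action of the stabilizer $H$ of a pair of generic flags $(V_\bullet,W_\bullet)$ on the third factor. I begin by analyzing $\PGL(n)$-orbits on $F\times F$: for two general flags the orbit is determined by the intersection dimensions $\dim(V_{\ell+i}\cap W_{\ell+j})=\max(0,2\ell+i+j-n)$, and when $n\geq 2\ell+2r$ one has $V_{\ell+r}\cap W_{\ell+r}=0$. Choosing a basis $f_1,\dots,f_n$ adapted to both flags, the stabilizer $H$ consists of matrices with a prescribed staircase pattern on the first $\ell+r$ and last $\ell+r$ columns and arbitrary middle columns, giving
\[
\dim H = n^2-2(\ell+r)n+2(\ell+1)(\ell+r)+r(r-1)-1.
\]

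The sparse direction in the intermediate range $2\ell+r<n<2\ell+2r$ is handled via the subspace $V_{\ell+r}\cap W_{\ell+r}$ of positive dimension $2\ell+2r-n$. Its relative position with respect to $U_{\ell+r}$ produces a continuous modulus: the intersection $V_{\ell+r}\cap W_{\ell+r}\cap U_{\ell+r}$ sits inside $V_{\ell+r}$, and its image in $V_{\ell+r}/V_{\ell+r-1}$ yields a nontrivial $\PGL$-invariant on $F^3$. In the main range $n\geq 2\ell+2r$, for $r=1$ (the Grassmannian case) and for $r=2$, the inequality $\dim H\geq\dim F$ always holds and an explicit column-by-column normalization of $U_\bullet$ using the staircase freedom of $H$ realizes a dense orbit; this is exactly the reason $r=2$ is singled out in condition (3). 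For $r\geq 3$ and $n\geq 3\ell+3r$, the three subspaces $V_{\ell+r},W_{\ell+r},U_{\ell+r}$ are independent, yielding a standard block-diagonal normal form. For the boundary value $n=3\ell+3r-2$, the kernel of the natural map $V_{\ell+r}\oplus W_{\ell+r}\oplus U_{\ell+r}\to\CC^n$ is two-dimensional, and this extra ``room'' permits a distinguished coordinate choice placing the kernel into a prescribed position, thereby forcing density.

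The crux is the sparse regime $3\ell+2r<n<3\ell+3r$ with $n\neq 3\ell+3r-2$ and $r\geq 3$. Here the construction $L_V:=V_{\ell+r}\cap(W_{\ell+r}+U_{\ell+r})$ produces a distinguished subspace of dimension $3(\ell+r)-n$ inside $V_{\ell+r}$, and the position of $L_V$ relative to the inner flag $V_{\ell+1}\subset\cdots\subset V_{\ell+r-1}$ (together with its analogues $L_W\subset W_{\ell+r}$ and $L_U\subset U_{\ell+r}$) yields continuous $\PGL$-invariants of the triple. The main obstacle, and in my view the most delicate step, is identifying precisely which of these invariants can be absorbed by the action of $H$ on the third flag and which cannot, thereby isolating the single exceptional value $n=3\ell+3r-2$ in condition (2) where the cancellation is complete. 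Throughout, the combinatorics of the staircase structure of $H$ and the dimension identity $\dim F=(\ell+r)n-(\ell+1)^2-(r-1)(\ell+r)+\binom{r-1}{2}$ drive the case analysis to match the arithmetic thresholds in (1)--(3).
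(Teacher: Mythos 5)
Your overall strategy --- fix two general flags, describe their stabilizer $H$ explicitly as a staircase group, and decide when $H$ acts with a dense orbit on the third copy of $F$ --- is legitimate in principle and genuinely different from the paper's route, which never computes $H$: the paper iterates Lemma \ref{lem-reduce}, Proposition \ref{prop-m1} and Lemma \ref{lem-duality} to replace $F(\ell+1,\dots,\ell+r;n)^3$ by $F(1,\dots,r-1;\ell+r-j)^3$ when $j=n-2\ell-2r\le \ell$, or by a complete flag variety in dimension $\ell+r-j$ when $j>\ell$, and then quotes Theorem \ref{thm-1r}; the sparsity in the band $2\ell+r<n<2\ell+2r$ comes from Corollary \ref{cor-three2step}, since $k_i+k_j=n$ for suitable $i\ne j$. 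Your dimension formulas for $H$ and for $\dim F$ are correct (they agree with $\dim\PGL(n)-2\dim F$), but the proposal stops exactly where the work begins: every step that produces the stated thresholds is asserted rather than proved.

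Concretely: (i) for the sparsity claim in $2\ell+r<n<2\ell+2r$ you propose a ``continuous modulus'' built from $V_{\ell+r}\cap W_{\ell+r}\cap U_{\ell+r}$, but for a general triple this intersection has dimension $\max(0,3(\ell+r)-2n)$, which vanishes as soon as $2n\ge 3(\ell+r)$ (for instance on the whole band when $\ell\ge r$), so the proposed invariant is vacuous there; in any case sparsity requires either a non-constant invariant function or a lower bound on the dimension of the full triple stabilizer, and you produce neither. (ii) The crux --- the band $3\ell+2r<n<3\ell+3r$, the isolation of $n=3\ell+3r-2$, and the threshold in condition (3) --- is explicitly left open in your own words (``the main obstacle \dots is identifying precisely which of these invariants can be absorbed''); the heuristic that a two-dimensional kernel of $V_{\ell+r}\oplus W_{\ell+r}\oplus U_{\ell+r}\to\CC^n$ ``forces density'' is not an argument and does not explain why kernel dimension $1$ or $\ge 3$ should behave differently. (iii) Your blanket claim that for $r=2$ a column-by-column normalization gives density for every $n\ge 2\ell+2r$ is made without proof and conflicts with the statement you set out to prove, which excludes $r=2$, $n=3\ell+3r-1$; this boundary is genuinely delicate (Theorem \ref{thm-three2step} in fact gives density of $F(\ell+1,\ell+2;3\ell+5)^3$, and the degenerate endpoint $\ell+r-j=1$ of Case 2 in the paper's proof deserves the same scrutiny), and careful bookkeeping of exactly such boundary cases is what your proposal never supplies. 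As it stands this is a plan, not a proof.
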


\begin{remark}
The cases when $\ell + r < n \leq 2\ell + r$ are determined by the duality exchanging $k_i$ with $n-k_i$ and are listed in detail in Theorem \ref{thm-1apart}.
\end{remark}

\begin{proposition*} [Proposition \ref{prop-1apart}]
 Assume that $m \geq 4$ and $n \geq (m-1) (\ell + r)$.  Set $$j = n - (m-1)(\ell+r).$$ Then the $\PGL(n)$ action on $F(\ell+1, \dots, \ell+r; n)^m$ is dense if and only if one of the following holds:
 \begin{enumerate}
  \item $n \geq m(\ell + r)$; or
 \item $r=1$ and $m(\ell+1)(n-\ell-1) < n^2$; or
  \item $0 \leq j < \ell$ and either $r=2$ and $m \leq \ell-j+r+1$ or $r > 2$ and $\ell-j+r \geq mr-m-1$.
 \end{enumerate}
\end{proposition*}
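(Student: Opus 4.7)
The plan is to prove each direction by combining an explicit normal-form construction (for density) with a parameter count or invariant (for sparsity). Throughout, let $d = \dim F(\ell+1,\dots,\ell+r;n)$ and note the necessary condition $md \leq n^2 - 1$. I would fix the first flag to be the standard flag spanned by $e_1,\dots,e_{\ell+r}$ and reduce the remaining $m-1$ flags using the parabolic stabilizer $P$ of dimension $n^2-d$. In case (1) one has $j \geq \ell+r$, so a generic $m$-tuple has its top subspaces $W_{\ell+r}^{(1)},\dots,W_{\ell+r}^{(m)}$ linearly independent; I would place them in consecutive coordinate blocks of size $\ell+r$, leaving a residual Levi $\prod_{i=1}^m \GL(\ell+r) \times \GL(j)$ modulo scalars. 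Each $\GL(\ell+r)$ still acts transitively on the complete sub-flag inside $W_{\ell+r}^{(i)}$, so a direct parameter count then gives density.

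For case (2), $r=1$, the problem reduces to classifying $m$ subspaces of dimension $\ell+1$ in $V$, and density is equivalent to $m(\ell+1)(n-\ell-1) < n^2$ by the same reasoning as the single-step case of Theorem \ref{thm-1r}. For case (3), $0 \leq j < \ell$, the top subspaces cannot be made independent. Instead I would choose a normal form for the first $m-1$ flags so that $\sum_{i<m} W_{\ell+r}^{(i)}$ has dimension $n$ with pairwise intersections of dimension $\ell+r-j$ in a controlled combinatorial pattern; the last flag is then represented by a matrix against this frame and reduced by the remaining stabilizer. In the subcase $r=2$ the last flag reduces to a nested pair (line $\subset$ plane) in each block, and the threshold $m \leq \ell-j+r+1$ appears as the condition that leaves no residual continuous parameter. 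For $r>2$ one carries out the analogous reduction layer by layer on the nested subspaces $W_{\ell+1}^{(m)} \subset \cdots \subset W_{\ell+r}^{(m)}$, and $\ell - j + r \geq mr - m - 1$ is the precise threshold at which the final flag degree of freedom can be absorbed by the residual group.

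The sparsity direction must handle the gap $\ell \leq j < \ell + r$ not covered by any of (1)--(3), together with the excluded subranges inside (3). In the gap a direct comparison should yield $md > n^2 - 1$, forbidding density on dimensional grounds. On the boundary of (3) the count becomes tight, so I would instead exhibit a $\PGL(n)$-invariant continuous function, typically a cross-ratio built from projecting $W_{\ell+r}^{(m)}$ onto the decomposition of the first $m-1$ top subspaces in their normal form. The main obstacle will be this sparsity analysis inside case (3): identifying the correct modular invariant requires careful bookkeeping parallel to, but more intricate than, that used in the proof of Theorem \ref{thm-1apart} for the $m=3$ analogue.
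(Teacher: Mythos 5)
There is a genuine gap, and it sits exactly where you flagged difficulty: the sparsity analysis. Your claim that in the range $\ell \leq j < \ell+r$ ``a direct comparison should yield $md > n^2-1$'' is false. Take $m=4$, $\ell=1$, $r=2$, $n=10$: then $j=1=\ell$, and $\dim F(2,3;10)^4 = 4\bigl(2\cdot 1 + 3\cdot 7\bigr) = 92 \leq 99 = \dim \PGL(10)$, so the action is not trivially sparse; yet it is sparse (it is equivalent, after the reductions below, to the $\PGL(2)$ action on $(\PP^1)^4$). So sparsity in the gap cannot be established by a dimension count on the original product, and you have no other argument there. Similarly, inside case (3) your density thresholds $m \leq \ell-j+r+1$ (for $r=2$) and $\ell-j+r \geq mr-m-1$ (for $r>2$) are asserted to ``appear as the condition that leaves no residual continuous parameter,'' but you never derive them: the proposed normal-form/matrix reduction of the last flag is not carried out, and the complementary sparsity cases are deferred to an unspecified ``cross-ratio'' invariant. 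These thresholds are precisely the content of Theorem \ref{thm-1r} for $F(1,\dots,r-1;\ell-j+r)^m$, so your outline in effect re-poses that theorem without proving it.

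The paper's route avoids all of this by a chain of equivalences rather than a direct stabilizer computation: apply Lemma \ref{lem-reduce} $m$ times to cut $n$ down to $(m-1)(\ell-j+r)$, then Proposition \ref{prop-m1} to drop the top step, then Lemma \ref{lem-duality}, arriving at $F(1,\dots,r-1;\ell-j+r)^m$ when $0\leq j<\ell$ (handled by Theorem \ref{thm-1r}) and at $F(1,\dots,\ell-j+r-1;\ell-j+r)^m$ when $\ell\leq j<\ell+r$ (never dense for $m\geq 4$). This is why sparsity holds in the gap even though the original product passes the naive dimension test. A smaller issue: in case (2) the equivalence of density with $m(\ell+1)(n-\ell-1)<n^2$ for $\Gr(\ell+1,n)^m$ is Popov's theorem (Theorem \ref{thm-popov}), not ``the same reasoning'' as the classical statement about points in $\PP^{n-1}$; that step is fine if you cite Theorem \ref{thm-popov}, but it is not elementary. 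To repair your proposal you would either need to import the paper's reduction lemmas or genuinely carry out the normal-form and invariant constructions you sketch, which amounts to reproving Theorem \ref{thm-1r}.
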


In the other extreme, we study the cases where the differences $k_i - k_{i-1}$ are large. Our results in this direction include the following.

\begin{theorem*} [Proposition \ref{prop-m1} and Corollary \ref{cor-farapart}] The $\PGL(n)$ action on  $F(k_1, \dots, k_r; (m-1)k_r)^m$ is dense if and only if the $\PGL(k_r)$ action on $F(k_1, \dots, k_{r-1}; k_r)^m$ is dense. In particular, if $k_i \geq (m-1)k_{i-1}$ for $2 \leq i \leq r+1$, then the $\PGL(n)$ action on $F(k_1, \dots, k_r; n)^m$ is dense.
\end{theorem*}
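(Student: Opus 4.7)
The central tool is the $\PGL(n)$-equivariant forgetful map $\pi: F(k_1, \dots, k_r; n)^m \to \Gr(k_r; n)^m$ that sends each flag to its top-dimensional subspace. For $n = (m-1)k_r$, my strategy is to show the base carries a dense $\PGL(n)$-orbit whose stabilizer is canonically a diagonal $\PGL(k_r)$, acting on the fiber of $\pi$ as the standard diagonal $\PGL(k_r)$-action on $F(k_1, \dots, k_{r-1}; k_r)^m$. Once this is in place, density upstairs is equivalent, via a slice/principal-bundle argument, to density on the fiber.

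To locate the dense orbit downstairs, I take $m$ generic $k_r$-subspaces $W^{(1)}, \dots, W^{(m)}$ of a vector space $V$ with $\dim V = (m-1)k_r$. A dimension count shows that any $m-1$ of them are in direct sum and span $V$, so using $\PGL(V)$ I may assume $V = V_1 \oplus \cdots \oplus V_{m-1}$ with $W^{(i)} = V_i$ for $i < m$; generically $W^{(m)}$ is the graph of isomorphisms $V_1 \to V_i$, and absorbing these into the chosen bases yields the \emph{standard configuration} $W^{(m)} = \{(v, v, \dots, v) : v \in W\}$ under identifications $V_i \cong W \cong \CC^{k_r}$. A direct check shows the $\PGL(V)$-stabilizer of this configuration is the diagonally embedded $\PGL(k_r)$: a block-diagonal $(g_1, \dots, g_{m-1}) \in \prod \GL(V_i)$ preserves the diagonal subspace if and only if $g_1 = \cdots = g_{m-1}$, and the center of $\GL(V)$ meets this diagonal in exactly the scalars of $\GL(k_r)$.

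The fiber of $\pi$ over the standard point is $\prod_i F(k_1, \dots, k_{r-1}; V_i) \cong F(k_1, \dots, k_{r-1}; k_r)^m$ equipped with the diagonal $\PGL(k_r)$-action, so density of $\PGL(n)$ on the total space is equivalent to density of this diagonal action, which is Proposition \ref{prop-m1}. For Corollary \ref{cor-farapart} I would induct on $r$: the hypothesis $k_i \geq (m-1)k_{i-1}$ for $2 \leq i \leq r+1$ both gives $n \geq (m-1)k_r$ and passes verbatim to the reduced problem on $F(k_1, \dots, k_{r-1}; k_r)$ with $k_r$ playing the role of the ambient dimension, so by induction $F(k_1, \dots, k_{r-1}; k_r)^m$ is $\PGL(k_r)$-dense. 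When $n = (m-1)k_r$, Proposition \ref{prop-m1} then concludes; when $n > (m-1)k_r$ I would re-run the Steps~1--3 analysis, noting that for $n \geq m k_r$ the $m$ subspaces are generically in direct sum and the generic stabilizer acts by $m$ independent copies of $\GL(k_r)$ (automatic density on the fiber), while the intermediate regime $(m-1)k_r < n < m k_r$ is handled by a parallel diagonal stabilizer calculation.

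The principal technical point is the slice/descent step, which must be justified carefully: one tracks $\PGL$ versus $\GL$ so that the diagonal $\GL(k_r)$ descends correctly to $\PGL(k_r)$ inside $\PGL(V)$, and one verifies that the fiber identification is $\PGL(k_r)$-equivariant with the expected diagonal action. The extension of the tight-bound argument to the full range $n \geq (m-1)k_r$ in the corollary is the other subtle point, but the required stabilizer computations are variants of the one above, and I expect no new obstruction to arise.
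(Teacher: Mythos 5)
Your treatment of Proposition \ref{prop-m1} is correct, and it takes a genuinely different route from the paper. The paper argues by an explicit linear-algebra construction (spanning each flag with the span $U_j$ of the other top subspaces and intersecting with $W_{m,k_r}$ to produce a configuration in $F(k_1,\dots,k_{r-1};k_r)^m$, checking this hits general configurations), and then compares stabilizer dimensions using the injectivity of the restriction map on stabilizers from \cite[Lemma 4.5]{CHZ}. Your route instead fibers over $\Gr(k_r,(m-1)k_r)^m$, identifies the dense orbit downstairs with stabilizer the diagonally embedded $\PGL(k_r)$, and uses the orbit correspondence for $G\times_H F$ over the open orbit; this yields both implications at once and is arguably cleaner, provided one verifies (as you flag) that the fiber identification with $F(k_1,\dots,k_{r-1};k_r)^m$ is equivariant for the diagonal $\PGL(k_r)$-action and that the $\GL$-versus-$\PGL$ bookkeeping is done. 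Those checks go through, so this half of the statement is fine.

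The gap is in Corollary \ref{cor-farapart}, in the regime $(m-1)k_r < n < mk_r$, which your hypotheses allow (e.g.\ $r=2$, $m=3$, $k_1=1$, $k_2=3$, $n=7$). You assert this range is ``handled by a parallel diagonal stabilizer calculation,'' but the situation changes qualitatively: writing $n=(m-1)k_r+j$ with $0<j<k_r$, the generic stabilizer of $m$ general $k_r$-planes has dimension $k_r^2+(m-2)k_rj+j^2-1$, is not a diagonal $\PGL(k_r)$, and its action on the fiber $F(k_1,\dots,k_{r-1};k_r)^m$ is not the diagonal action, so density on the fiber is no longer the inductive statement you want to quote. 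The paper bridges exactly this range with Lemma \ref{lem-reduce} (applied $m$ times), which replaces the ambient dimension $n$ by $(m-1)(k_r-j)$ and shifts the whole dimension vector down by $j$ (dropping the indices with $k_i\le j$); only after that reduction does Proposition \ref{prop-m1} apply, and one must also note that the shifted vector still satisfies $k_i-j\ge (m-1)(k_{i-1}-j)$ so that the induction on $r$ proceeds. Your outline contains neither this reduction nor a substitute for it, so the corollary is not proved in the stated generality; the boundary cases $n=(m-1)k_r$ and $n\ge mk_r$ (the latter also follows directly from Lemma \ref{lem-easydense}) are the only ones your argument actually covers.
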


We also exhibit  infinite families of products  with sparse $\PGL(n)$ action.

\begin{theorem*} [Proposition \ref{prop-sparse} and Proposition \ref{prop-obstruction}]

\begin{enumerate}
\item The $\PGL(n)$ action on $F(k_1, \dots, k_r; n)^m$ is sparse if there exists two indices $1 \leq i<j \leq r$ such that $k_j = n - t k_i$ for some $t \geq 1$ and $m \geq \max(4, t+2)$.
\item The $\PGL(n)$ action on $F(k_1, \dots, k_r; n)^3$ is sparse if there exists three indices $1 \leq h< i< j \leq r$ such that $2k_h + k_i + k_j= 2n$.
\end{enumerate}
\end{theorem*}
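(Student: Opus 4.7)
Both assertions share a strategic template: use the arithmetic hypothesis on $k_\bullet$ to isolate a distinguished sub-collection of subspaces whose dimensions add up to a multiple of $n$, normalize this sub-collection with $\PGL(n)$, and then exhibit a $\PGL(n)$-invariant scalar coming from the remaining data that the residual group cannot absorb.

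\medskip

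\noindent\textbf{Part (1).} The identity $k_j = n - tk_i$ is calibrated so that, for a generic configuration, one has the direct sum decomposition
\[
V \;=\; V^{(1)}_{k_i} \oplus \cdots \oplus V^{(t)}_{k_i} \oplus V^{(t+1)}_{k_j}.
\]
I would first check transversality by a standard dimension count, then use $\PGL(n)$ to place the decomposition in a standard position. The stabilizer of the decomposition is $H := (\GL(k_i)^t \times \GL(k_j))/\CC^\times$, of dimension $tk_i^2 + k_j^2 - 1$. Because $m \geq t+2$, at least one further flag $F^{(t+2)}$ remains, and its $k_i$-subspace $L$ is parameterized in general position, via the graph construction, by a tuple $(A_2,\ldots,A_t;B) \in \GL(k_i)^{t-1} \times \Hom(V^{(1)}_{k_i}, V^{(t+1)}_{k_j})$. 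The group $H$ acts on such tuples by $A_\alpha \mapsto g_\alpha A_\alpha g_1^{-1}$ and $B \mapsto hBg_1^{-1}$. A direct calculation then shows that one such $L$ can be fully normalized but the stabilizer of the normalized $L$ in $H$ has dimension smaller than the moduli contributed by a second tuple: for $t=1$ this forces a second extra flag (hence $m \geq 4$), while for $t \geq 2$ the first extra flag already produces invariants (hence $m \geq t+2$).

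\medskip

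\noindent\textbf{Part (2).} Invoking the duality noted in the remark after Theorem \ref{thm-1apart}, I may assume $2k_h \leq n$, so that $V^{(1)}_{k_h} + V^{(2)}_{k_h}$ generically has dimension $2k_h$. The arithmetic $2k_h + k_i + k_j = 2n$ then forces the intersections
\[
K := V^{(3)}_{k_i} \cap \bigl(V^{(1)}_{k_h} + V^{(2)}_{k_h}\bigr), \qquad K' := V^{(3)}_{k_j} \cap \bigl(V^{(1)}_{k_h} + V^{(2)}_{k_h}\bigr)
\]
to have generic dimensions $n-k_j$ and $n-k_i$ respectively, with $K \subset K'$ inside the $2k_h$-dimensional space $V^{(1)}_{k_h} \oplus V^{(2)}_{k_h}$. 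After normalizing the three $k_h$-subspaces to a standard position by $\PGL(n)$, the residual group stabilizes the decomposition $V^{(1)}_{k_h} \oplus V^{(2)}_{k_h}$ and acts on the flag $K \subset K'$; I expect a scalar invariant to emerge as a cross-ratio of the projections of $K$ and $K'$ onto the summands $V^{(1)}_{k_h}$ and $V^{(2)}_{k_h}$, and this invariant depends non-trivially on the configuration.

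\medskip

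The main obstacle in both parts is pinning down the invariant. A naive dimension count only recovers the global bound $m \cdot \dim F(k_\bullet; n) \leq n^2 - 1$, which is entirely insensitive to the arithmetic hypotheses of the theorem. The delicate step is to produce a genuinely numerical $\PGL(n)$-invariant --- an eigenvalue of a composition of projections or a cross-ratio built from the specific subspaces identified above --- and to verify that it varies continuously over the parameter space of configurations. Establishing this rigidity is the main technical content of both proofs.
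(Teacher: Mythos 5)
Your proposal is a strategy outline rather than a proof: in both parts the decisive step --- producing a non-constant $\PGL(n)$-invariant, or equivalently showing that the residual group action on the leftover data has no dense orbit --- is asserted (``a direct calculation then shows \dots'', ``I expect a scalar invariant to emerge \dots'') and then conceded in your closing paragraph to be the missing technical content. That is exactly the content the paper supplies, and it does so not by normalizing a direct-sum decomposition and hunting for an invariant, but by an explicit span-and-intersect construction that maps a general configuration equivariantly and dominantly onto a configuration of \emph{four mutually general subspaces} of a small auxiliary space whose dimensions sum to twice the ambient dimension; sparsity then follows from the known four-subspace result (Theorem \ref{thm-Grassmannian4}, i.e.\ Schwarz--Wehlau), or from a dimension count inside the small space. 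For part (1) the auxiliary space is $W=\Span(U_1,U_2)$ of dimension $2k_i$, and the spaces $U_3'=S_3\cap W$, $U_4'=S_4\cap W$ (with $S_3,S_4$ spans of a $k_j$-space and $t-1$ further $k_i$-spaces) give a general point of $\Gr(k_i,2k_i)^4$; this is where the hypothesis $m\geq\max(4,t+2)$ and the arithmetic $k_j=n-tk_i$ actually enter. Your direct-sum normalization with graph coordinates $(A_2,\dots,A_t;B)$ never identifies which known sparse model is being hit, and the claimed dimension comparison for the residual stabilizer is not carried out.

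For part (2) there is a concrete flaw, not just an omission: the derived configuration you isolate --- the two $k_h$-spaces together with the \emph{nested} pair $K\subset K'$ cut out by the third flag --- need not carry any invariant, because the nesting destroys the four-subspace sparsity. For example, take $k_h=2$, $k_i=n-3$, $k_j=n-1$ (so $2k_h+k_i+k_j=2n$ for every $n\geq 6$): inside the $4$-dimensional space $V^{(1)}_{k_h}\oplus V^{(2)}_{k_h}$ your data is two transverse $2$-planes plus a flag $(\mbox{line}\subset\mbox{hyperplane})$, and a short computation shows $\PGL(4)$ acts on $\Gr(2,4)^2\times F(1,3;4)$ with a dense orbit, so no cross-ratio of projections exists and your argument cannot detect sparsity in these cases. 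The paper avoids this by using \emph{crossed} intersections, namely $U_i=W_{i,k_3}\cap W_{1,k_1}$ for $i=2,3$ together with $V_2=\Span(W_{2,k_1},W_{3,k_2})\cap W_{1,k_1}$ and $V_3=\Span(W_{3,k_1},W_{2,k_2})\cap W_{1,k_1}$: these are four unnested, mutually general subspaces of $W_{1,k_1}$ with dimension sum $2(k_1+k_3-n)+2(2k_1+k_2-n)=2k_1$, and the reversibility of the construction lets one invoke Theorem \ref{thm-Grassmannian4} to conclude. You would need to replace your $K\subset K'$ by some such cross-term construction (and prove the dominance/reversibility statement) for the approach to go through.
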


\noindent For example, we conclude that the $\PGL(n)$ action on $F(k, n-2k; n)^m$ is dense if and only if $m \leq 3$. Similarly, given $k_1<k_2$ such that $k_1 + k_2$ is even, the $\PGL(n)$ action on $F(k_1, k_2, n- \frac{k_1+k_2}{2}; n)^3$ is sparse.

\subsection*{History of the problem and applications} The problem of classifying algebraic group actions with dense orbit has a long history. Special cases have played an important role in classical geometry.   The systematic study of products of flag varieties with dense $\PGL$ orbits was initiated by Popov \cite{Popov, Popov2} based on a question of M. Burger and further studied in \cite{CEY, CHZ, Devyatov}. We refer the reader to \cite{Smirnov} for a recent survey. 

More generally, Popov studied the problem for connected algebraic groups  and classified the simple linear algebraic groups $G$ such that the action of $G$ on $(G/P)^n$ is dense when $P$ is a maximal parabolic subgroup \cite{Popov, Popov2}. Devyatov has extended the  classification to non-maximal parabolic subgroups $P$, except in type $A$ \cite{Devyatov}. When $G/P$ is a type A partial flag variety, even the classification for $(G/P)^3$ is unknown. The closely related but easier problem of  characterizing products of  partial flag varieties with finitely many  $\PGL(n)$ orbits was solved by Magyar, Weyman and Zelevinsky \cite{MWZ}.

 The density of the $\PGL(n)$ action on a product of flag varieties  has many applications; we refer the reader to \cite{CHZ, Popov2} for details.  One of the main motivations for Popov's work was to obtain bounds on certain Littlewood-Richardson coefficients and prove that they are multiplicity-free. More explicitly, let $\lambda_1, \dots, \lambda_d$  be nonzero dominant characters of the maximal torus $T$ in the semi-simple group $G$. Then $(\lambda_1, \dots, \lambda_d)$  is called {\em primitive} if for every nonnegative $d$-tuple of integers $(n_1, \dots, n_d)$, the Littlewood-Richardson coefficient $c^0_{n_1\lambda_1,\dots ,n_d \lambda_d} \leq 1$. Popov in \cite[Theorem 1]{Popov2} proves that if $G$ has an open orbit on $G/P_{\lambda_1} \times \cdots \times G/P_{\lambda_d}$, then $(\lambda_1, \dots, \lambda_d)$ is primitive.  Similarly, the density has  geometric applications to the study of vector bundles, enumerative geometry and genus zero Gromov-Witten invariants (see \cite{CHZ} for more details).

\subsection*{Organization of the paper} In \S \ref{sec-prelim}, after discussing basic facts concerning the geometry of partial flag varieties and group actions, we recall key results from \cite{CHZ} and \cite{CEY} concerning the $\PGL(n)$ action on products of flag varieties. In \S \ref{sec-density}, we characterize the density of the action on $F(k_1, \dots, k_r; n)^m$ when $k_i = i$ for $1 \leq i \leq r$ or when $k_i \geq (m-1) k_{i-1}$ for $2 \leq i \leq r$. Finally, in \S \ref{sec-sparse}, we give further examples of sparse actions on products of partial flag varieties.

\subsection*{Conflict of interest statement} The authors have no conflict of interest to declare that are relevant to this article.
\subsection*{Data availability statement} There is no data associated to this article.

\subsection*{Acknowledgements} We would like to thank Dave Anderson, Demir Eken, James Freitag, Majid Hadian, J\'{a}nos Koll\'{a}r, Howard Nuer, Sybille Rosset, Geoffrey Smith, Chris Yun and Dmitry Zakharov for helpful discussions regarding actions of $\PGL(n)$ on products of varieties.

\section{Preliminaries}\label{sec-prelim}

In this section, we collect basic facts concerning the diagonal action of $\PGL(n)$ on products of flag varieties. We refer the reader to \cite{CHZ, CEY} for proofs and additional  details.

\subsection{Partial flag varieties} Let $V$ be an $n$-dimensional vector space.  For an integer $0 < k < n$, let $\Gr(k,n)$ denote the Grassmannian parameterizing $k$-dimensional subspaces of $V$. It is a projective variety of dimension $k(n-k)$. 

More generally, let $0 < k_1 < \cdots < k_r < n$ be a sequence of $r$  integers. For notational convenience, set $k_0=0$ and $k_{r+1}=n$.  Let $F(k_1, \dots, k_r; n)$ denote the  flag variety parameterizing partial flags $$W_{k_1} \subset  \cdots \subset W_{k_r} \subset V,$$ where $\dim (W_{k_i}) = k_i$. The flag variety   $F(k_1, \dots, k_r; n)$  is a projective variety of dimension $$\dim(F(k_1, \dots, k_r; n)) = \sum_{i=1}^r k_i(k_{i+1}- k_i).$$ The group $\PGL(n)$ acts transitively on $F(k_1, \dots, k_r; n)$. In this paper, we will be concerned with the induced diagonal action on $\prod_{i=1}^m F(k_{i, 1} , \dots, k_{i, r_i}; n)$.

 We say that the action is {\em dense} if there exists a dense orbit of the action. Otherwise, we say that the action is {\em sparse}.  If the action of $\PGL(n)$ on $\prod_{i=1}^m F(k_{i, 1} , \dots, k_{i, r_i}; n)$ is dense, then $$\dim (\PGL(n))= n^2 -1 \geq \sum_{i=1}^m \sum_{j=1}^{r_i} k_{i,j}(k_{i,j+1}- k_{i,j})= \dim \left(\prod_{i=1}^m F(k_{i, 1} , \dots, k_{i, r_i}; n)\right).$$ If this inequality is violated, we say that the action is {\em trivially sparse}. It is easy to give examples of actions that are sparse but not trivially sparse (see \cite{CHZ, CEY}).

We will often prove the equivalence of the density of two different products of flag varieties. The following notation will be useful.  
\begin{notation}
Given a variety $X$ with a $\PGL(n)$ action and a variety $Y$ with a $\PGL(m)$ action, we write $X \Leftrightarrow Y$ to mean that the density of the action on $X$ is equivalent to the density of the action on $Y$. 
\end{notation}

\subsection{Dense orbits and the dimension of stabilizers} Our analysis depends on an elementary observation. If $G$ is an algebraic group acting on an irreducible projective variety $X$,  then the orbit $Gx$ of a closed point $x$ under $G$ is open in its Zariski closure $\overline{Gx}$ by \cite[I.1.8]{Borel}. Let $H$ denote the stabilizer of $x$. Since $X$ is irreducible, the orbit of $x$ is dense if and only if $$\dim (Gx) = \dim(\overline{Gx}) = \dim(X).$$ Since $Gx$ is isomorphic to $G/H$, we conclude that the orbit of $x$ is dense if and only if $$\dim(H)= \dim(G) - \dim(X).$$ Observe that since $\dim(Gx) \leq \dim(X)$, the inequality $\dim(H) \geq \dim(G) - \dim(X)$ always holds. To verify the  density of the  orbit of  $x$, one has to check the inequality $\dim(H) \leq \dim(G) - \dim(X)$. We say  $\dim(G) - \dim(X)$ is {\em the expected dimension} of the stabilizer. 

\begin{lemma}\label{lem-basiclem}\cite[Lemma 2.2]{CHZ}
Let $X$ be an irreducible projective variety with a $\PGL(n)$ action. Let  $x \in X$ be a closed point and let $H$ denote the stabilizer of $x$. Then the orbit of $x$ is dense in $X$ if and only if $$\dim(H) = n^2-1 - \dim(X).$$ In particular, the diagonal action of $\PGL(n)$ on $\prod_{i=1}^m F(k_1, \dots, k_r; n)$ is dense if and only if there exists a closed point in $\prod_{i=1}^m F(k_1, \dots, k_r; n)$ whose stabilizer has dimension (at most) $$n^2 -1 - m \sum_{i=1}^r k_i (k_{i+1} - k_i).$$
\end{lemma}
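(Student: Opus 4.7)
The plan is to deduce this from standard orbit-stabilizer facts in the theory of algebraic group actions, specifically the fact that the orbit map $G \to Gx$ factors through an isomorphism $G/H \xrightarrow{\sim} Gx$ of varieties, giving the dimension formula $\dim(Gx) = \dim(G) - \dim(H)$. Once we have this, the lemma becomes essentially a one-line computation, so the work is really in invoking the right foundational results.

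First I would recall (as the excerpt already does, citing \cite[I.1.8]{Borel}) that for any action of a connected algebraic group $G$ on a variety $X$, every orbit $Gx$ is a locally closed smooth subvariety of $X$: it is open in its Zariski closure $\overline{Gx}$. The stabilizer $H = \Stab_G(x)$ is a closed subgroup of $G$, the quotient $G/H$ exists as a homogeneous variety, and the orbit map $G \to X$, $g \mapsto g\cdot x$, induces a bijective morphism $G/H \to Gx$ which is in fact an isomorphism of varieties (for fields of characteristic zero this is standard; we are working over $\CC$). Consequently $\dim(Gx) = \dim(G/H) = \dim(G) - \dim(H)$.

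Next I would use irreducibility of $X$. Since $X$ is irreducible, a constructible subset is dense if and only if its closure equals $X$, equivalently if and only if its dimension equals $\dim(X)$. Applying this with $G = \PGL(n)$, so $\dim(G) = n^2 - 1$, we get
\[
Gx \text{ is dense in } X \iff \dim(Gx) = \dim(X) \iff n^2 - 1 - \dim(H) = \dim(X),
\]
which rearranges to $\dim(H) = n^2 - 1 - \dim(X)$. This proves the first assertion. Note also that since $\dim(Gx) \leq \dim(X)$ automatically, we always have $\dim(H) \geq n^2 - 1 - \dim(X)$, so verifying density amounts to producing a point whose stabilizer has dimension \emph{at most} the expected value.

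Finally, to get the displayed statement about products of flag varieties, I would just specialize. The diagonal action of $\PGL(n)$ on $X = \prod_{i=1}^m F(k_1,\dots,k_r;n)$ makes $X$ irreducible and projective, with
\[
\dim(X) = m\sum_{i=1}^r k_i(k_{i+1}-k_i),
\]
so the general criterion above becomes the stated numerical condition. The only subtle point is the parenthetical \emph{(at most)}: in the statement we only need \emph{some} point with stabilizer of dimension at most the expected dimension, because the reverse inequality is automatic from $\dim(Gx) \leq \dim(X)$. I do not anticipate any real obstacle here; the content is entirely the orbit-stabilizer theorem in the algebraic category, and the main thing to be careful about is citing the version that gives an actual variety isomorphism $G/H \cong Gx$ rather than just a bijection on points.
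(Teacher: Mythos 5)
Your proposal is correct and follows essentially the same route the paper itself sketches in \S 2.2 (and attributes to \cite[Lemma 2.2]{CHZ}): the orbit is open in its closure by \cite[I.1.8]{Borel}, irreducibility of $X$ makes density equivalent to $\dim(Gx)=\dim(X)$, and the identification $Gx \cong G/H$ gives $\dim(Gx)=\dim(G)-\dim(H)$, with the automatic inequality $\dim(Gx)\leq\dim(X)$ accounting for the parenthetical ``at most.'' Nothing further is needed.
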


\subsection{Classification of dense actions in special cases}
The dense diagonal actions on products of at most four Grassmannians have been classified (see \cite{CHZ, SW}).

\begin{theorem}\cite[Theorem 5.1]{CHZ}\label{thm-Grassmannian4}
Let $m \leq 4$ and let  $X=\prod_{i=1}^m \Gr(k_i, n)$ be a product of $m$-Grassmannians. Then the $\PGL (n)$  action on $X$ is sparse if and only if $m=4$ and $\sum_{i=1}^4 k_i = 2n$.
\end{theorem}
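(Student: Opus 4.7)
My plan is to apply Lemma \ref{lem-basiclem}: the $\PGL(n)$ action on $X = \prod_{i=1}^m \Gr(k_i, n)$ is dense iff some configuration has stabilizer of dimension equal to the expected value $d := n^2 - 1 - \sum_i k_i(n - k_i)$. In every dense case I would exhibit an explicit configuration and show its stabilizer has dimension $d$; for the sparse case I would construct a non-constant $\PGL(n)$-invariant rational function.

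For $m \leq 3$ (always dense), the case $m=1$ is just transitivity of $\PGL(n)$ on $\Gr(k,n)$. For $m=2$ I would place $W_1, W_2$ as standard coordinate subspaces intersecting in dimension $\max(0, k_1+k_2-n)$; the stabilizer is a block upper triangular subgroup whose dimension I can read off directly and check equals $d$. For $m=3$ I would pick three subspaces in general position and reduce the stabilizer computation to a block matrix calculation. Alternatively, the three-subspace problem corresponds to a quiver with finitely many orbits of any fixed dimension vector, so a dense orbit exists automatically.

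For $m=4$ and $\sum k_i = 2n$ (sparse direction), the hard part is producing a cross-ratio type invariant. The model case $(n, k_1, k_2, k_3, k_4) = (3, 1, 1, 2, 2)$ is illuminating: two points and two lines in $\PP^2$, where the two points span a canonical line $\ell$, the two lines meet $\ell$ in two further points, and the cross-ratio of the resulting four points on $\ell$ is a $\PGL(3)$-invariant of the configuration that varies continuously. In general, the condition $\sum k_i = 2n$ forces natural intersection or sum constructions such as $(W_1 + W_2) \cap (W_3 + W_4)$ to have a distinguished dimension, producing a canonical auxiliary subspace from which a cross-ratio-like invariant can be extracted.

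For $m=4$ and $\sum k_i \neq 2n$ (dense direction), I would construct an explicit configuration with stabilizer of dimension $d$. The duality $W \mapsto W^\perp$ applied simultaneously to all four factors preserves density and the condition $\sum k_i = 2n$, so I may assume $\sum k_i < 2n$. In this regime there is enough room to place $W_1, \dots, W_4$ essentially in direct-sum position, and a direct computation shows the stabilizer has the expected dimension. The main obstacle is producing the invariant in the sparse case: identifying the right canonical auxiliary object is subtle and depends on the relative sizes of the $k_i$, and the resulting cross-ratio construction must be carried out in a family to verify that the invariant is genuinely non-constant.
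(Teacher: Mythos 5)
This statement is not proved in the paper at all: it is quoted from \cite[Theorem 5.1]{CHZ} (see also \cite{SW}), so the relevant comparison is with that proof, which rests on the structure theory of quadruples of subspaces (equivalently, representations of the four-subspace quiver, which is tame of type $\tilde D_4$, and the invariant theory of Schwarz--Wehlau). Your treatment of $m\leq 3$ is fine: transitivity, the two-subspace block computation, and the finite-orbit argument for three subspaces (finite representation type, or \cite{MWZ}) all work, and finitely many orbits on an irreducible variety does force a dense one. The genuine gaps are both halves of the $m=4$ case, which is exactly the content being cited. For the dense direction with $\sum k_i<2n$, the phrase ``essentially in direct-sum position'' does not describe a configuration once $\sum k_i>n$ (e.g.\ four points in $\PP^2$, or $\Gr(k,n)^4$ with $2n<4k<\dots$), and no explicit quadruple with stabilizer of the expected dimension $n^2-1-\sum_i k_i(n-k_i)$ is produced; this regime is the technical heart of the CHZ argument, where the generic quadruple is analyzed through its decomposition into indecomposables (the nonzero ``defect'' $\sum k_i-2n$ is what rules out regular summands and moduli), and it cannot be dismissed as ``a direct computation.''

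For the sparse direction with $\sum k_i=2n$, you only verify the model case $(n;k_\bullet)=(3;1,1,2,2)$. The proposed general mechanism is not yet an argument: the auxiliary space $(W_1+W_2)\cap(W_3+W_4)$ is all of $V$ whenever $k_1+k_2\geq n$ and $k_3+k_4\geq n$ (which happens, e.g., for $k_i=n/2$), so ``a distinguished dimension'' and the ensuing cross-ratio must be set up by a case analysis on the relative sizes of the $k_i$, or by a reduction to a base case such as $\Gr(k,2k)^4$ or two points and two lines on a canonical subspace --- in the spirit of Lemma \ref{lem-reduce} and Proposition \ref{prop-m1}, but preserving the condition $\sum k_i=2n$ --- and one must then check the resulting invariant is nonconstant on the generic orbit. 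You explicitly flag this as an open obstacle, and indeed it is the missing idea; in \cite{CHZ} (and \cite{SW}) this is precisely where the zero-defect/tube structure of the four-subspace quiver, i.e.\ the one-parameter families of regular summands generalizing the cross-ratio, does the work. As it stands the proposal is a plausible outline whose two decisive steps are not carried out.
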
 

Similarly, the dense diagonal actions on self-products of Grassmannians have been classified by Popov \cite[Theorem 3]{Popov} and generalized to  $\prod_{i=1}^m \Gr(k_i, n)$ with $|k_i - k_j|<3$ for all $i,j$ \cite[Theorem 7.2]{CHZ}. We state a special case which will be sufficient for our purposes.

\begin{theorem}\cite[Theorem 7.2]{CHZ}\label{thm-popov}
Let $k<n$ be positive integers and let $a, b$ be nonnegative integers.  The $\PGL(n)$ action on  $\Gr(k-1, n)^a \times \Gr(k,n)^b$ is dense if and only if it is not trivially sparse and  $\Gr(k-1, n)^a \times \Gr(k,n)^b \not= \Gr(k-1, 2k-1)^2 \times \Gr(k,2k-1)^2$.
\end{theorem}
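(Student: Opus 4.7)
The plan is to verify both directions of the biconditional via Lemma~\ref{lem-basiclem}.

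For the ``only if'' direction, trivial sparsity immediately rules out density. In the exceptional case $\Gr(k-1, 2k-1)^2 \times \Gr(k, 2k-1)^2$, we have $n = 2k-1$ and the four Grassmannian dimensions satisfy $2(k-1) + 2k = 4k-2 = 2n$, so Theorem~\ref{thm-Grassmannian4} immediately implies the $\PGL(n)$-action is sparse.

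For the ``if'' direction, assume the action is not trivially sparse and $X := \Gr(k-1, n)^a \times \Gr(k, n)^b$ is not the exception. By Lemma~\ref{lem-basiclem}, it suffices to exhibit a closed point with stabilizer of dimension exactly $n^2 - 1 - \dim X$. I would proceed by induction on $a + b$ using the following reductions: (i) the duality $\Gr(k, n) \cong \Gr(n-k, n)$ permits the assumption $k \leq \lceil n/2 \rceil$; (ii) when $a = 0$ or $b = 0$, the statement reduces to Popov's theorem for a self-product of a single Grassmannian; (iii) in the mixed case $a, b \geq 1$, use the $\PGL(n)$-action to fix one generic pair $V_{k-1} \subset V_k$ in canonical coordinates, reducing $\PGL(n)$ to the parabolic subgroup $P$ preserving this flag. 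Decomposing each remaining $(k-1)$- or $k$-subspace via its intersection with $V_k$ and its image in $V/V_{k-1}$ converts the residual problem into a density question for a product of Grassmannians in the smaller ambient spaces $V_k$ and $V/V_{k-1}$, to which the inductive hypothesis applies.

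The main obstacle is verifying that this induction faithfully reproduces the expected stabilizer dimension in every non-exceptional case and that the base cases contain no additional anomalies. The exceptional case is genuinely distinguished because only there do the four Grassmannian dimensions sum to exactly $2n$, triggering the Theorem~\ref{thm-Grassmannian4} obstruction; for these parameters $\dim X = 4k(k-1) = n^2 - 1$, so the expected stabilizer is zero-dimensional, yet a generic configuration always admits a positive-dimensional family of automorphisms. One must ensure that intermediate reductions in the induction never accidentally produce a sub-configuration matching these exceptional parameters without being accounted for, and the most delicate verifications occur at the base of the induction, where parameter counts for small $n$ and $k$ must be checked by direct stabilizer computation.
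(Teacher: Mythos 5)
This statement is not proved in the paper at all: it is quoted verbatim from \cite[Theorem 7.2]{CHZ}, so there is no in-paper argument to compare yours against. Judged on its own, your ``only if'' direction is fine: trivial sparsity excludes density, and in the exceptional case the four Grassmannian dimensions $(k-1)+(k-1)+k+k=2n$ put you exactly in the sparse case of Theorem~\ref{thm-Grassmannian4}, with $\dim X=4k(k-1)=n^2-1$ as you note.

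The ``if'' direction, however, has a genuine gap at step (iii), and it is precisely where the content of the theorem lives. Fixing a generic flag $V_{k-1}\subset V_k$ and passing to the parabolic $P$ does not convert the remaining factors into a product of Grassmannians inside $V_k$ and $V/V_{k-1}$: first, when $2k\le n$ a generic $k$- or $(k-1)$-plane meets $V_k$ only in $0$ and surjects onto a full-dimensional image in $V/V_{k-1}$, so your proposed invariants (intersection with $V_k$, image in $V/V_{k-1}$) carry no information and the ``reduction'' does not shrink the problem; second, even when the intersections are nontrivial, the density of the $P$-action on the residual configuration is not equivalent to density of two decoupled $\PGL$-actions on the smaller ambient spaces --- the unipotent radical of $P$ and the relative positions of the subspaces with respect to the fixed flag enter the stabilizer computation in an essential way, and no argument is given that the expected stabilizer dimension is reproduced. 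You flag this yourself as ``the main obstacle,'' but it is not a verification detail: it is the theorem. The actual proof in \cite{CHZ} does not run such an induction; it rests on the representation theory of the subspace (star) quiver --- Kac's theorem and the canonical decomposition of the relevant dimension vectors --- together with explicit stabilizer computations, which is how the single exceptional configuration $\Gr(k-1,2k-1)^2\times\Gr(k,2k-1)^2$ is isolated. As written, your proposal is a plausible-looking reduction scheme whose central equivalence is unproved and, in the range $2k\le n$, not even correctly set up.
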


In the case of products of partial flag varieties much less is known. A complete classification is available only for triple self-products of two-step partial flag varieties.

\begin{theorem}\cite[Theorem 5.10]{CEY}\label{thm-three2step}
The $\PGL(n)$ action on  $F(k_1, k_2; n)^3$ is dense if and only if $k_1+ k_2 \not= n$. 
\end{theorem}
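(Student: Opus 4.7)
The plan is to treat the two implications separately, using duality to halve the work. The involution $(W_1 \subset W_2) \mapsto (W_2^\perp \subset W_1^\perp)$ in $V^*$ gives a $\PGL(n)$-equivariant isomorphism $F(k_1, k_2; n) \cong F(n-k_2, n-k_1; n)$ (for the outer automorphism of $\PGL(n)$), so I may assume $k_1 + k_2 \leq n$.

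For the sparsity direction, suppose $k_1 + k_2 = n$. Then $\dim V/W_2^{(j)} = n - k_2 = k_1$, matching $\dim W_1^{(i)}$. For a generic triple of flags and $i \neq j$, the composite
\[
\pi_{ij} \colon W_1^{(i)} \hookrightarrow V \twoheadrightarrow V/W_2^{(j)}
\]
is an isomorphism, since $W_1^{(i)} \cap W_2^{(j)}$ has expected dimension $k_1 + k_2 - n = 0$. Cycling through the three flags yields a canonical automorphism
\begin{multline*}
\Phi \colon W_1^{(1)} \xrightarrow{\pi_{12}} V/W_2^{(2)} \xrightarrow{\pi_{32}^{-1}} W_1^{(3)} \xrightarrow{\pi_{31}} V/W_2^{(1)} \\ \xrightarrow{\pi_{21}^{-1}} W_1^{(2)} \xrightarrow{\pi_{23}} V/W_2^{(3)} \xrightarrow{\pi_{13}^{-1}} W_1^{(1)}
\end{multline*}
of $W_1^{(1)}$ whose $\GL(k_1)$-conjugacy class is $\PGL(n)$-invariant. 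For $k_1 = 1$ this is already a non-constant scalar (a cross-ratio type invariant); for general $k_1$ the characteristic polynomial supplies the continuous invariants. Since the conjugacy class of $\Phi$ genuinely varies as the configuration varies, Lemma \ref{lem-basiclem} forces the stabilizer to exceed its expected dimension, proving sparsity.

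For the density direction, suppose $k_1 + k_2 < n$. Consider the $\PGL(n)$-equivariant projection $F(k_1, k_2; n)^3 \to \Gr(k_1, n)^3$; by Theorem \ref{thm-popov} the target has a dense orbit, so I may place the three $k_1$-subspaces $W_1^{(i)}$ in a fixed generic position. The fibre over this point is $\prod_{i=1}^3 \Gr(k_2 - k_1, V/W_1^{(i)})$, and the stabilizer of the triple in $\PGL(n)$ has dimension $n^2 - 1 - 3 k_1(n - k_1)$. I would then verify that this stabilizer acts with dense orbit on the fibre: the strict inequality $k_1 + k_2 < n$ guarantees extra room in each quotient $V/W_1^{(i)}$, reducing the fibrewise problem to a product-of-Grassmannians density in lower-dimensional ambients again covered by Theorem \ref{thm-popov}, or checked directly by exhibiting a normal form using the unipotent part of the stabilizer.

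The main obstacle is the density direction: the stabilizer of three generic $k_1$-planes in $\PGL(n)$ is a relatively intricate subgroup (with a nontrivial unipotent radical when $3 k_1 < n$), and one must confirm its induced action on the fibrewise Grassmannian product is honestly dense, not merely of the correct dimension. In the sparsity direction, the parallel subtlety is to verify that the conjugacy class of $\Phi$ attains infinitely many values on a dominant subset of $F(k_1, k_2; n)^3$; this reduces to a transversality check on an explicit one-parameter family of flags.
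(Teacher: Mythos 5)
First, a point of comparison: this paper does not prove Theorem \ref{thm-three2step} at all; it is imported verbatim from \cite[Theorem 5.10]{CEY}. So your proposal has to stand on its own, and as written it does not yet close either direction. For sparsity ($k_1+k_2=n$), the monodromy-type automorphism $\Phi$ of $W_1^{(1)}$ is a sound and natural construction (it is the analogue of the invariant used in \cite{CHZ} for four Grassmannians with $\sum k_i=2n$), and its $\GL(k_1)$-conjugacy class is indeed a $\PGL(n)$-invariant of the configuration. But the entire content of this direction is the claim that this invariant is non-constant on a dense subset, and you defer exactly that: you would need to exhibit, say, an explicit one-parameter family of triples of flags on which the trace or characteristic polynomial of $\Phi$ genuinely varies. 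Until that computation is done the argument is a plan, not a proof. (Also note that once non-constancy is established you do not need Lemma \ref{lem-basiclem}: a non-constant invariant rational function on $F(k_1,k_2;n)^3$ already rules out a dense orbit, since any invariant function is constant on an orbit and hence on its closure.)

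The more serious gap is the density direction. After using Theorem \ref{thm-popov} to fix three general $k_1$-planes, the group acting on the fibre $\prod_{i=1}^3 \Gr(k_2-k_1, V/W_1^{(i)})$ is the stabilizer $H\subset\PGL(n)$ of that triple, which is a proper subgroup (with nontrivial unipotent radical when $3k_1<n$) and is not a projective linear group acting on a product of Grassmannians of a single ambient space; Theorem \ref{thm-popov} therefore says nothing about its action, and the phrase ``again covered by Theorem \ref{thm-popov}, or checked directly by exhibiting a normal form'' is precisely the missing argument rather than a reduction to a known one. Note also that the reduction tools of this paper (Lemma \ref{lem-reduce}, Proposition \ref{prop-m1}) only apply when $n\geq 2k_2$, so the range $k_1+k_2<n<2k_2$ (e.g.\ $F(1,3;5)^3$), which is not removed by the duality of Lemma \ref{lem-duality}, must be handled by some genuinely new input. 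The standard way to finish, consistent with how such statements are proved in \cite{CHZ,CEY} and elsewhere in this paper (compare the explicit coordinate-subspace configuration and stabilizer computation for $F(t,n-1;n)^{t+1}$), is to write down an explicit triple of flags and verify via Lemma \ref{lem-basiclem} that its stabilizer has exactly the expected dimension $n^2-1-3k_1(k_2-k_1)-3k_2(n-k_2)$; your proposal does not carry out any such verification, so the density half remains unproven.
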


The following immediate corollary of Theorem \ref{thm-three2step} will be useful.

\begin{corollary}\label{cor-three2step}
Let $m \geq 3$. The $\PGL(n)$ action on  $F(k_1, \dots, k_r; n)^m$ is sparse if there exists indices $i \not= j$ such that $k_i + k_j = n$.
\end{corollary}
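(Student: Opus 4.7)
The plan is to reduce the statement to Theorem \ref{thm-three2step} by a standard equivariant projection argument. Assume $m \geq 3$, and suppose $i \neq j$ satisfy $k_i + k_j = n$.

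First, I would construct a $\PGL(n)$-equivariant surjective morphism
\[
\pi \colon F(k_1, \dots, k_r; n)^m \longrightarrow F(k_i, k_j; n)^3
\]
as follows. On each single factor $F(k_1, \dots, k_r; n)$, forgetting all subspaces except those of dimensions $k_i$ and $k_j$ defines a $\PGL(n)$-equivariant surjection $F(k_1, \dots, k_r; n) \to F(k_i, k_j; n)$. Then $\pi$ is the composition of the projection onto the first three of the $m$ factors (which exists since $m \geq 3$) with this forgetful map applied componentwise.

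Next, I would use the fact that an equivariant surjection sends orbits to orbits, and in particular sends a dense orbit to a dense orbit. If the $\PGL(n)$ action on $F(k_1, \dots, k_r; n)^m$ had a dense orbit $\mathcal{O}$, then $\pi(\mathcal{O})$ would be a constructible $\PGL(n)$-invariant set containing a dense open subset of $F(k_i, k_j; n)^3$; since $\mathcal{O}$ is a single orbit and $\pi$ is equivariant, $\pi(\mathcal{O})$ would be contained in a single $\PGL(n)$-orbit of the target, which would therefore be dense.

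Finally, since $k_i + k_j = n$, Theorem \ref{thm-three2step} tells us that no dense orbit exists on $F(k_i, k_j; n)^3$. This contradicts the conclusion of the previous step, so the action on $F(k_1, \dots, k_r; n)^m$ must be sparse. There is no real obstacle here; the only thing to verify carefully is that the forgetful map and the three-factor projection are both $\PGL(n)$-equivariant and surjective, which is immediate from the transitivity of the $\PGL(n)$ actions on all the flag varieties involved.
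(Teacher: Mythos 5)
Your proof is correct and is exactly the argument the paper has in mind: the corollary is stated as an immediate consequence of Theorem \ref{thm-three2step}, and the equivariant forgetful-plus-projection map sending a dense orbit to a dense orbit is the same reduction the paper itself uses in its sparsity proofs (e.g.\ Propositions \ref{prop-sparse} and \ref{prop-obstruction}). No issues to flag.
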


\subsection{Reduction lemmas}
In this subsection, we collect several easy lemmas. 

Consider the action of $\PGL(n)$ on a product $\prod_{i=1}^m F(k_{i,1}, \dots, k_{i, r_i}; n)$.
If $n\geq \sum_{i=1}^m k_{i, r_i}$, then we can choose a basis for the underlying vector space $V$ so that the vector space $W_{i, k_{i,j}}$ is the span of basis elements $e_{\ell}$ with $\sum_{t=1}^{i-1} k_{t,j} < \ell \leq \sum_{t=1}^{i} k_{t,j}$. The stabilizer matrices are then of block diagonal form and the dimension of the stabilizer is the expected dimension. 

\begin{lemma}\cite[Lemma 5.5]{CEY}\label{lem-easydense}
The product $\prod_{i=1}^m F(k_{i,1}, \dots, k_{i, r_i}; n)$ is dense if $\sum_{i=1}^m k_{i, r_i} \leq n$.  
\end{lemma}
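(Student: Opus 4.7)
The plan is to produce an explicit point $x \in X = \prod_{i=1}^m F(k_{i,1}, \ldots, k_{i,r_i}; n)$ whose $\PGL(n)$-stabilizer has the expected dimension $n^2 - 1 - \dim X$, so that density follows immediately from Lemma \ref{lem-basiclem}. The hypothesis $\sum_i k_{i,r_i} \leq n$ is exactly what allows every flag to be placed in its own disjoint block of the ambient basis, forcing the stabilizer into block form.

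First I would fix an ordered basis $e_1, \ldots, e_n$ of $V$, set $a_i := k_{i,r_i}$, $N := \sum_i a_i$, and $s_i := \sum_{t<i} a_t$, and define for $1 \leq j \leq r_i$
\[
W_{i, k_{i,j}} := \Span\{e_{s_i+1}, \ldots, e_{s_i + k_{i,j}}\}.
\]
Since the $k_{i,j}$ are strictly increasing in $j$, this gives a genuine flag for each $i$, hence a point $x \in X$. Next I would describe $\Stab_{\GL(n)}(x)$: the top subspace $W_{i,a_i}$ of the $i$-th flag is exactly the span of the $i$-th block of basis vectors, and these blocks are pairwise disjoint subsets of $\{e_1, \ldots, e_N\}$. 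Consequently, any $g$ fixing $x$ must preserve each $W_{i,a_i}$, so with respect to the decomposition $V = V_1 \oplus \cdots \oplus V_m \oplus V_{m+1}$ (where $V_{m+1}$ is the slack of dimension $n-N$), $g$ has the form
\[
g = \begin{pmatrix} A_1 & 0 & \cdots & 0 & C_1 \\ 0 & A_2 & \cdots & 0 & C_2 \\ \vdots & & \ddots & & \vdots \\ 0 & 0 & \cdots & A_m & C_m \\ 0 & 0 & \cdots & 0 & D \end{pmatrix},
\]
with each $A_i \in \GL(a_i)$ lying in the parabolic that stabilizes the shorter flag $F(k_{i,1}, \ldots, k_{i,r_i-1}; a_i)$ inside $V_i$, each $C_i$ an arbitrary $a_i \times (n-N)$ matrix, and $D \in \GL(n-N)$ arbitrary.

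Finally I would sum dimensions. Writing $\delta_i := \dim F(k_{i,1}, \ldots, k_{i,r_i-1}; a_i)$, the parabolic has dimension $a_i^2 - \delta_i$, and the blocks $C_i$ and $D$ together contribute $N(n-N) + (n-N)^2 = n(n-N)$, giving
\[
\dim \Stab_{\GL(n)}(x) = \sum_i a_i^2 - \sum_i \delta_i + n(n-N).
\]
Using the identity $\dim F(k_{i,1}, \ldots, k_{i,r_i}; n) = \delta_i + a_i(n-a_i)$, one checks directly that the right-hand side equals $n^2 - \dim X$. Subtracting one for the scalar center of $\GL(n)$ then gives $\dim \Stab_{\PGL(n)}(x) = n^2 - 1 - \dim X$, and Lemma \ref{lem-basiclem} concludes. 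No real obstacle arises here: once the disjoint-block placement is written down, both the block form of the stabilizer and the dimension count are routine bookkeeping, and the hypothesis enters exactly by reserving a private block of basis vectors for each factor.
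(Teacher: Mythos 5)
Your proposal is correct and follows essentially the same route as the paper's own argument (the sketch preceding the lemma, citing \cite{CEY}): place each flag as a coordinate flag in its own disjoint block of basis vectors, observe that the stabilizer then has block form, and check that its dimension equals the expected value $n^2-1-\dim X$, so density follows from Lemma \ref{lem-basiclem}. The only difference is that you write out the block structure (parabolic blocks $A_i$, free columns on the slack $V_{m+1}$) and the dimension bookkeeping explicitly, which the paper leaves implicit.
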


There is a natural duality between $F(k_1, \dots, k_r; n)$ and $F(n-k_r, \dots, n-k_1; n)$ sending a vector space $W \subset V$ to the annihilator of $W$ in the dual vector space $V^*$. This action respects the natural $\PGL(n)$ actions and yields the following lemma. 

\begin{lemma}\cite[Proposition 2.7]{CEY}\label{lem-duality}
We have  $$\prod_{i=1}^m F(k_{i,1}, \dots, k_{i, r_i}; n) \Leftrightarrow \prod_{i=1}^m F(n-k_{i,r_i}, \dots, n-k_{i, 1}; n).$$ 
\end{lemma}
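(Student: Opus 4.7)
The plan is to construct an explicit $\PGL(n)$-equivariant (up to an outer automorphism of $\PGL(n)$) isomorphism between the two products, and then observe that any automorphism of the group sends dense orbits to dense orbits.

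First, I would define the duality on a single factor. For a subspace $W \subset V$, let $W^{\perp} \subset V^*$ denote its annihilator, so $\dim(W^{\perp}) = n - \dim(W)$. If $W_{k_1} \subset \cdots \subset W_{k_r} \subset V$ is a partial flag, then taking annihilators reverses inclusions and yields a flag $W_{k_r}^{\perp} \subset \cdots \subset W_{k_1}^{\perp} \subset V^*$ with dimensions $n-k_r < \cdots < n-k_1$. This gives an isomorphism of projective varieties
\begin{equation*}
\phi : F(k_1,\dots,k_r;V) \xrightarrow{\sim} F(n-k_r,\dots,n-k_1;V^*).
\end{equation*}
Fixing a basis identifies $V^* \cong V$ and thus identifies the target with $F(n-k_r,\dots,n-k_1;n)$. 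Taking the product over the $m$ factors yields an isomorphism
\begin{equation*}
\Phi : \prod_{i=1}^m F(k_{i,1},\dots,k_{i,r_i};n) \xrightarrow{\sim} \prod_{i=1}^m F(n-k_{i,r_i},\dots,n-k_{i,1};n).
\end{equation*}

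Next I would track the $\PGL(n)$-equivariance. The natural $\GL(V)$ action on $V^*$ is the contragredient action: $g \cdot \xi = \xi \circ g^{-1}$, so in coordinates $g$ acts on $V^*$ via $(g^{-1})^T$. A direct check shows that $\phi(g \cdot W) = (g^{-1})^T \cdot \phi(W)$, and hence $\Phi$ intertwines the diagonal $\PGL(n)$ action on the source with the diagonal action on the target twisted by the involutive outer automorphism $\sigma : \PGL(n) \to \PGL(n)$, $g \mapsto (g^{-1})^T$. Explicitly, for every closed point $x$ in the source,
\begin{equation*}
\Phi(\PGL(n) \cdot x) = \sigma(\PGL(n)) \cdot \Phi(x) = \PGL(n) \cdot \Phi(x),
\end{equation*}
since $\sigma$ is a group automorphism and therefore surjective.

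Since $\Phi$ is an isomorphism of varieties that carries $\PGL(n)$-orbits bijectively to $\PGL(n)$-orbits (preserving dimensions, by the same bijection applied to stabilizers, which are interchanged by $\sigma$), a dense orbit exists on one side if and only if a dense orbit exists on the other side. This is precisely the equivalence $\prod_{i=1}^m F(k_{i,1},\dots,k_{i,r_i};n) \Leftrightarrow \prod_{i=1}^m F(n-k_{i,r_i},\dots,n-k_{i,1};n)$. The only real bookkeeping is the verification that $\phi$ is $\sigma$-equivariant, which is immediate from the definition of the dual action on annihilators; there is no substantive obstacle.
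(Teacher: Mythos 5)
Your proof is correct and follows essentially the same route the paper takes: the paper cites \cite[Proposition 2.7]{CEY} and justifies it by exactly this annihilator duality $W \mapsto W^{\perp} \subset V^*$, noting that it respects the natural $\PGL(n)$ actions. Your write-up simply makes explicit the contragredient twist by the outer automorphism $g \mapsto (g^{-1})^T$ and the resulting orbit-to-orbit bijection, which is the standard bookkeeping behind that statement.
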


\begin{notation}
Let $0< k_1 < \cdots < k_r<n$ be an increasing  sequence of positive integers less than $n$. We will denote the sequence by $k_{\bullet}$.  Let $n' \leq n$ and set $d= n- n'$. Let $u$ be the index such that $k_{u-1} \leq d < k_u$. Then for $1 \leq i \leq r-u+1$, set $k_i' = k_{u+i -1} - d$.   Given a sequence $k_{\bullet}$ and an integer $n' < n$, we will call the sequence $k_{\bullet}'$  {\em the sequence derived from $k_{\bullet}$ with respect to $n$ and $n'$}.   Notice that this only depends on $n-n'$, so if we do not wish to emphasize $n$ and $n'$, we will sometimes say {\em the sequence derived from $k_{\bullet}$ with respect to $d$}. Given a vector space $W$ of dimension $n'$ and a general partial flag $F_{\bullet}$ in an $n$-dimensional vector space with dimensions $k_{\bullet}$,  the sequence $k_{\bullet}'$ denotes the dimension vector of the partial flag in $W$ obtained by $F_{\bullet} \cap W$.
\end{notation}

The following useful lemma will allow us to reduce the dimension. 

\begin{lemma}\cite[Lemma 5.6]{CEY}\label{lem-reduce}
Suppose $k_{i, \bullet}$ are $m$ sequences of increasing positive integers less than $n$. Assume that $n'= \sum_{i=1}^{m-1} k_{i, r_i} \leq n < \sum_{i=1}^m k_{i, r_i}$. Let $k_{m, \bullet}'$ be the sequence derived from $k_{m,\bullet}$ with respect to $n$ and $n'$. Then 
$$\prod_{i=1}^m F(k_{i, \bullet}; n) \Leftrightarrow \prod_{i=1}^{m-1} F(k_{i, \bullet}; n') \times F(k_{m, \bullet}'; n').$$ 
\end{lemma}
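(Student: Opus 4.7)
The plan is to use Lemma \ref{lem-basiclem} to translate the equivalence into a comparison of stabilizer dimensions at carefully chosen generic points, and then to compute these dimensions by a block-matrix analysis.

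Set $X := \prod_{i=1}^m F(k_{i,\bullet}; n)$, $Y := \prod_{i=1}^{m-1} F(k_{i,\bullet}; n') \times F(k_{m,\bullet}'; n')$, and $d := n - n'$. I first choose a generic point $p = (F_1, \dots, F_m) \in X$ such that (i) the top subspaces of $F_1, \dots, F_{m-1}$ are in direct sum and together span a subspace $W \subset V$ of dimension exactly $n'$, and (ii) $F_m$ meets $W$ transversely, meaning $\dim(W_{m, k_{m, \ell}} \cap W) = \max(0, k_{m, \ell} - d)$ for every $\ell$. These conditions cut out a dense open subset of $X$, and the intersections $F_m \cap W$ form a flag in $W$ of dimensions $k_{m,\bullet}'$, so $q := (F_1, \dots, F_{m-1}, F_m \cap W)$ is a generic point of $Y$.

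Since $W$ is intrinsically the span of $F_1, \dots, F_{m-1}$, every $g \in H_p := \Stab_{\PGL(n)}(p)$ preserves $W$. Fixing a splitting $V = W \oplus U$, lift $g$ to a block matrix $\begin{pmatrix} A & B \\ 0 & D \end{pmatrix} \in \GL(V)$. A direct check shows that $g \in H_p$ precisely when $A$ preserves $F_1, \dots, F_{m-1}$ together with $F_m \cap W$ in $W$, $D$ preserves the image flag $\bar F_m \subset V/W \cong U$ of dimensions $k_{m, 1}, \dots, k_{m, u-1}$, and $B$ lies in an explicit subspace $\Lambda \subset \Hom(U, W)$. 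Choosing the standard model in which $W_{m, k_{m, \ell}} = \Span(e_{n - k_{m, \ell} + 1}, \dots, e_n)$ for $\ell < u$ and $W_{m, k_{m, \ell}} = U \oplus \Span(e_{n' - (k_{m, \ell} - d) + 1}, \dots, e_{n'})$ for $\ell \geq u$, the space $\Lambda$ consists of those $B : U \to W$ with $B(W_{m, k_{m, u-1}}) = 0$ and $B(U) \subset W_{m, k_{m, u}} \cap W$. Because the three conditions on $A, B, D$ decouple, $\dim \Stab_{\GL(V)}(p) = \dim \Stab_{\GL(W)}(q) + \dim \Stab_{\GL(U)}(\bar F_m) + \dim \Lambda$, and passing to $\PGL$ (the scalar centers cancel), $\dim H_p - \dim H_q = \dim \Stab_{\GL(U)}(\bar F_m) + \dim \Lambda$.

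By Lemma \ref{lem-basiclem} the density conditions on $X$ and on $Y$ are equivalent provided this difference equals $n^2 - (n')^2 - (\dim X - \dim Y)$. Using $\dim F(k_{i,\bullet}; n) - \dim F(k_{i,\bullet}; n') = k_{i, r_i} d$ for $i \leq m-1$ (only the final term of the flag-dimension formula depends on the ambient dimension), the identity reduces to
$$\dim \Stab_{\GL(U)}(\bar F_m) + \dim \Lambda = d^2 + n'd - E_m,$$
where $E_m := \dim F(k_{m,\bullet}; n) - \dim F(k_{m,\bullet}'; n')$. Substituting $\dim \Stab_{\GL(U)}(\bar F_m) = d^2 - \dim F(k_{m, 1}, \dots, k_{m, u-1}; d)$ and $\dim \Lambda = (d - k_{m, u-1})(k_{m, u} - d)$ makes this a routine combinatorial check against the formula $\dim F(k_1, \dots, k_r; N) = \sum_j k_j(k_{j+1} - k_j)$. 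The main obstacle is confirming the decoupling of the three block-matrix conditions and carrying out this final dimension bookkeeping, both of which follow from the general-position hypothesis on $F_m$ and the explicit standard form chosen above.
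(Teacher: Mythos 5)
Your argument is correct, and it is necessarily independent of the paper's text, since the paper does not prove Lemma \ref{lem-reduce} at all but imports it from \cite[Lemma 5.6]{CEY}. Your route is a direct block-matrix computation of the full stabilizer; by contrast, the closest argument the paper does spell out (Proposition \ref{prop-m1}) proceeds by a span-and-intersection construction plus an injectivity statement for the restriction map on stabilizers (\cite[Lemma 4.5]{CHZ}) and a comparison of expected dimensions. In the situation of Lemma \ref{lem-reduce} the restriction map $H_p \to H_q$ has a positive-dimensional kernel (your $B$ and $D$ blocks), so computing that kernel exactly, as you do, is the right substitute for the injectivity argument. I checked the two steps you defer: the three conditions on $A$, $B$, $D$ do decouple, and the final identity reduces to $d(n'+d-n)=0$, so the dimension bookkeeping closes.

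Two points should be tightened in the write-up. First, you fix a splitting $V=W\oplus U$ before invoking the standard model, but the model requires $U$ to be adapted to $F_m$, namely $W_{m,k_{m,u-1}}\subseteq U\subseteq W_{m,k_{m,u}}$ with $U$ complementary to $W$; such a $U$ exists precisely because of your transversality hypothesis (ii), and once it is chosen the decoupled description of the stabilizer holds for every $p$ satisfying (i) and (ii), not only for a coordinate example. Second, the assertion that $q$ is a generic point of $Y$ needs the (easy) dominance of $p\mapsto q$: given any point of $Y$ whose first $m-1$ top spaces are independent, take the subspaces of $F_m$ of index $\ell\geq u$ to be the span of $U$ and the corresponding member of the last flag of $q$, and place the subspaces of index $\ell<u$ inside $U$; this reconstructs a preimage $p$ in standard position. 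Alternatively, since Lemma \ref{lem-basiclem} only requires exhibiting a single point whose stabilizer has the expected dimension, this reconstruction (in one direction) and the image point $q(p)$ of a point in the dense orbit (in the other) already suffice, so full genericity of $q$ is not actually needed.
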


\section{Density results}\label{sec-density}
In this section, we prove our main results on density. We first analyze the case where $k_i=i$. We then consider the  other extreme when the difference between $k_i$ and $k_{i+1}$ is large.

\subsection{The case $k_i = i$} We first study the case when $k_i = i$ for $1 \leq i \leq r$. For simplicity, we denote this partial flag variety by $F(1,2, \dots, r-1, r; n)$.

\begin{theorem}\label{thm-1r}
The $\PGL(n)$ action on  $F(1, 2, \dots, r-1, r; n)^m$ is dense if and only if one of the following holds:
\begin{enumerate}
\item \label{i} $m\leq 2$;
\item  \label{ii} $r=1$, and $n \geq m-1$;
\item \label{iii} $m=3$, $r>1$, and $n \geq 3r-2$;
\item \label{iv} $m>3$, $r>1$, and $n \geq mr-1$.
\end{enumerate}
\end{theorem}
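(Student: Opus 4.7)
The plan is to treat three regimes separately: $m \le 2$, the classical case $r = 1$, and the main regime $m \ge 3$, $r \ge 2$. For $m = 1$ the action is transitive. For $m = 2$, the $\PGL(n)$-orbit on ordered pairs of complete flags is dense (a consequence of the Bruhat decomposition), and projecting down to $F(1,2,\dots,r;n)^2$ preserves the dense orbit. The case $r = 1$ is the classical $\PP^{n-1}$ statement already cited in the introduction. The remainder of the argument concerns $m \ge 3$, $r \ge 2$, and I would handle the sparsity and density directions separately.

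For sparsity (necessity of the inequality on $n$), I would first verify that the trivial dimension inequality $m(rn - \binom{r+1}{2}) > n^2 - 1$ already forces sparsity for the smallest values in the sparse range (this handles $r \le 3$ in case \ref{iii} and $r = 2$ in case \ref{iv}). For the remaining boundary values, namely $n = 3r - 3$ with $r \ge 4$ in case \ref{iii} and $n = mr - 2$ with $r \ge 3$ in case \ref{iv}, I would apply Lemma \ref{lem-reduce} iteratively. Each application takes a product of flag varieties in which the top dimensions sum to more than the ambient dimension, peels off one factor of top dimension $r$, and replaces it by its derived sequence, which here has the clean form $(1, 2, \dots, r - d)$ in ambient dimension $n - d$. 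The ambient dimension strictly decreases, so the procedure terminates; one checks that it stabilizes at a trivially sparse product such as $F(1, 2, 3; 6)^3$ in case \ref{iii} or $F(1, 2; 6)^m$ in case \ref{iv}. Since Lemma \ref{lem-reduce} preserves density-equivalence, sparsity propagates back to the original product.

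For density (sufficiency), when $n \ge mr$ the sum $mr$ of top dimensions is at most $n$, and Lemma \ref{lem-easydense} applies directly. It remains to treat the boundary values $n = mr - 1$ (case \ref{iv}) and $n = 3r - 2$ (case \ref{iii}), which I would do by explicit construction. Pick vectors $v^i_j \in \CC^n$ for $1 \le i \le m$, $1 \le j \le r$, and set $V^i_j = \langle v^i_1, \dots, v^i_j \rangle$. Take a maximal independent subset of the $mr$ vectors $v^i_j$ as a basis of $\CC^n$, and express the remaining one (case \ref{iv}) or two (case \ref{iii}) as generic linear combinations of the basis. The stabilizer Lie algebra is then computed from the conditions $g(v^i_j) \in V^i_j$: each block $g|_{V^i_r}$ is upper triangular in the $v^i_\bullet$-basis, and matching coefficients in the generic linear expressions for the ``extra'' $v^i_r$'s gives a linear system in these upper-triangular entries. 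Genericity of the relation coefficients makes the rows of the system independent, and a direct count shows that the free-parameter count is exactly the expected dimension $n^2 - 1 - m(rn - \binom{r+1}{2})$, which equals $m\binom{r}{2}$ in case \ref{iv} and $\tfrac{3}{2}(r-1)(r-2)$ in case \ref{iii}.

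The delicate step is the boundary stabilizer calculation for case \ref{iii}. There, the two generic relations among the $v^i_j$ impose matching conditions across all three blocks $g|_{V^i_r}$, and one must verify that these conditions force the three diagonal scalars $\alpha_i$ arising from $g(v^i_r) \equiv \alpha_i v^i_r \pmod{V^i_{r-1}}$ to coincide with a single $\lambda$, rather than imposing additional constraints on the remaining upper-triangular entries; one then shows the leftover system has a solution space of the expected dimension. This block-by-block linear-algebra computation, together with verification that the genericity assumptions are open conditions on the $v^i_j$, is where the bulk of the technical effort lies. Once it is carried out, extending from the threshold value of $n$ to all larger $n$ is a straightforward combination of Lemma \ref{lem-easydense} and Lemma \ref{lem-reduce}.
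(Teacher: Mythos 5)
Your sparsity argument has a genuine gap of coverage. You treat only (a) the cases where the trivial dimension count applies and (b) the single boundary values $n=3r-3$ ($m=3$) and $n=mr-2$ ($m\geq 4$). But the theorem requires sparsity for \emph{every} $n$ up to those bounds, and density is not monotone in $n$ for fixed flag type (the paper's Theorem \ref{thm-1apart} already gives products that are dense at $n=3\ell+3r-2$ but sparse at $n=3\ell+3r-1$), so sparsity at the top of the range does not propagate downward for free. Moreover, many of the intermediate values are \emph{not} trivially sparse: writing $n=mr-j$, the expected stabilizer dimension is $j^2-1+mr\bigl(\tfrac{r+1}{2}-j\bigr)$, which is nonnegative whenever $j\lesssim r/2$. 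Concretely, $F(1,\dots,5;17)^4$ (that is, $j=3$) has $4\cdot 70=280\leq 288=\dim\PGL(17)$, and $F(1,2,3,4;13)^4$ has $4\cdot 42=168=\dim\PGL(13)$ exactly; neither is trivially sparse, neither is a boundary value, yet both must be shown sparse. Your proposal leaves this infinite family unaddressed. The paper closes it by running the reduction for \emph{all} $j$ with $0<j<r$: $m$ applications of Lemma \ref{lem-reduce} give $F(1,\dots,r;mr-j)^m \Leftrightarrow F(1,\dots,j;(m-1)j)^m$, and the quadratic dimension estimate (checked at the endpoints $n'=j+1$ and $n'=(m-1)j$) shows the latter is trivially sparse whenever $j\geq 2$ for $m\geq 4$, respectively $j\geq 3$ for $m=3$. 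Your plan needs exactly this uniform step, not just the two extreme values of $n$.

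On the density side your dimension targets at the thresholds ($m\binom{r}{2}$ at $n=mr-1$ and $\tfrac32(r-1)(r-2)$ at $n=3r-2$) are correct, but the explicit stabilizer computation you propose is left as a sketch precisely at its hardest point (the independence of the matching conditions and the collapse of the scalars $\alpha_i$ in case \eqref{iii}), and it is also unnecessary: the same reduction $F(1,\dots,r;mr-j)^m \Leftrightarrow F(1,\dots,j;(m-1)j)^m$ disposes of both thresholds at once, giving $(\PP^{m-2})^m$ for $j=1$ (dense by the classical $n+1$ points statement) and $F(1,2;4)^3$ for $m=3$, $j=2$ (dense by Theorem \ref{thm-three2step}). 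If you insist on the direct computation you must actually carry out the rank verification you defer; otherwise, folding the density thresholds into the same reduction you already use for sparsity both completes the argument and shortens it.
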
 

\begin{proof}
\noindent{\bf Step 1: $m\leq 2$.} When $m\leq 2$, the action of $\PGL(n)$ on $F(1, 2, \dots, r-1, r; n)^m$ has only finitely many orbits \cite[Theorem 2.2]{MWZ}, hence the maximal dimensional orbit is dense. This is Case \eqref{i} of the theorem. 
\smallskip

\noindent{\bf Step 2: $r=1$.} When $r=1$, the corresponding partial flag variety is $\PP^{n-1}$. The action of $\PGL(n)$ on $m$ points in $\PP^{n-1}$ has a dense orbit if and only if $m \leq n+1$ (see \cite[Section 1.6]{harris}). This is Case \eqref{ii} of the theorem. 

We may therefore assume that $m \geq 3$ and $r > 1$ for the rest of the proof.
\smallskip

\noindent{\bf Step 3:} We now show that if $n \leq (m-1)r$, then $F(1, 2, \dots, r-1, r; n)^m$ is trivially sparse except when $m=3$ and $r=2$. We have
that
\begin{equation} \label{eq-rdim}
\varphi(n,m,r) := \dim(\PGL(n)) - \dim (F(1, 2, \dots, r-1, r; n)^m) = n^2 -1 - m \frac{(r-1)r}{2} - m r(n-r).
\end{equation}
For fixed $m$ and $r$,  the function $\varphi(n,m,r)$ is a quadratic function of $n$ with leading term $n^2$. Hence, if for fixed $m$ and $r$, the function $\varphi$ takes negative values at $n_1$ and $n_2$, it must take negative values for $n_1 \leq n \leq n_2$ as well.  When $n=r+1$, we have  $$\varphi(r+1, m , r) = \left(1 - \frac{m}{2}\right) r^2 + \left(2-\frac{m}{2}\right)r.$$ This is negative  since $m\geq 3$ and  $r>1$. When $n= (m-1)r$,  we have $$\varphi((m-1)r, m, r) = (m-1)^2 r^2 - 1 - m \frac{(r-1)r}{2} - m(m-2)r^2= \left(1- \frac{m}{2}\right)r^2 + \frac{mr}{2} -1,$$ which is negative when $m\geq 4$ and $r > 1$ or when $m=3$ and $r>2$. We conclude that if $m \geq 4$ and $r>1$ or when $m=3$ and $r>2$, then  $\dim(F(1, 2, \dots, r-1, r; n)^m) > \dim(\PGL(n))$  for $r+1 \leq n \leq (m-1)r$. Hence, the $\PGL(n)$ action on $F(1, 2, \dots, r-1, r; n)^m$ is trivially sparse if $n \leq (m-1)r$ except when $m=3$ and $r=2$.  By Theorem \ref{thm-three2step}, the $\PGL(4)$ action on $F(1,2;4)^3$ is dense. This case is covered by Case \eqref{iii} of the theorem.  We may therefore assume that $n > (m-1)r$. 
\smallskip

\noindent{\bf Step 4: $n \geq mr$.} If $n\geq mr$, then the $\PGL(n)$ action on $F(1, 2, \dots, r-1, r; n)^m$ is dense by Lemma \ref{lem-easydense}. Hence, we may assume that $(m-1)r < n < mr$. 
\smallskip

\noindent{\bf Step 5: $(m-1)r < n < mr$.}  Write $n= mr-j$ for some $0 < j < r$. By applying Lemma \ref{lem-reduce} repeatedly $m$ times, we have  $$F(1, 2, \dots, r-1, r; n)^m \Leftrightarrow F(1, 2, \dots, j-1, j; (m-1)j)^m.$$ If $m \geq 4$, this is trivially sparse if $j>1$ and dense if $j=1$ by  Steps 2 and  3. This yields Case \eqref{iv} of the theorem. If $m=3$, this is trivially sparse if $j >2$ and dense if $1 \leq j \leq 2$ by Steps 2 and 3. This yields Case \eqref{iii} of the theorem and concludes the proof.
\end{proof}

\subsection{The case of sequences with large gaps} Next, we study the case when the sequence $k_1, \dots, k_r$ has large gaps. Our analysis is based on the following observation.

\begin{proposition}\label{prop-m1}
We have  $$F(k_1, \dots, k_r; (m-1)k_r)^m \Leftrightarrow F(k_1, \dots, k_{r-1}; k_r)^m.$$ 
\end{proposition}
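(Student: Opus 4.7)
The strategy is to analyze the forgetful projection
$$p: F(k_1, \dots, k_r; (m-1)k_r)^m \to \Gr(k_r, (m-1)k_r)^m$$
that remembers only the top $k_r$-dimensional subspace in each flag. I would first show by explicit construction that the $\PGL((m-1)k_r)$ action on the base has a dense orbit whose stabilizer is isomorphic to $\PGL(k_r)$, and then identify the fiber of $p$ over a base point of this orbit, together with the induced stabilizer action, with $F(k_1, \dots, k_{r-1}; k_r)^m$ equipped with the diagonal $\PGL(k_r)$ action.

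Concretely, set $n = (m-1)k_r$ and decompose $V = \CC^n = V_1 \oplus \cdots \oplus V_{m-1}$ into $k_r$-dimensional summands. Take $W_{i, k_r} = V_i$ for $1 \leq i \leq m-1$ and let $W_{m, k_r}$ be the graph of generic isomorphisms $\phi_i: V_{m-1} \to V_i$ (with $\phi_{m-1} = \mathrm{id}$), i.e., $W_{m, k_r} = \{(\phi_1(v), \dots, \phi_{m-2}(v), v) : v \in V_{m-1}\}$. A direct calculation shows that the $\PGL(V)$-stabilizer of this configuration consists of block diagonal elements $(A_1, \dots, A_{m-1})$ satisfying $A_i = \phi_i A_{m-1} \phi_i^{-1}$, yielding a subgroup isomorphic to $\PGL(V_{m-1}) \cong \PGL(k_r)$. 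Comparing dimensions $(m-1)^2 k_r^2 - 1 - (k_r^2 - 1) = m(m-2)k_r^2 = \dim \Gr(k_r, n)^m$ verifies that this orbit is dense.

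Next, I would identify the fiber of $p$ over this base point, which is $\prod_{i=1}^m F(k_1, \dots, k_{r-1}; W_{i, k_r})$. Using $\phi_i^{-1}$ for $i < m$ and the coordinate projection $(\phi_1(v), \dots, v) \mapsto v$ for $i = m$, each $W_{i, k_r}$ is identified with $V_{m-1}$, so the fiber becomes $F(k_1, \dots, k_{r-1}; k_r)^m$. A short substitution $v \mapsto A_{m-1}v$ in the graph parametrization confirms that, under these identifications, the induced action of the stabilizer $\PGL(k_r)$ is precisely the diagonal action. The proof then concludes via Lemma \ref{lem-basiclem} applied to both sides: a point $x$ over our chosen base point has $\PGL(n)$-stabilizer of expected dimension in $F(k_1, \dots, k_r; n)^m$ if and only if the image of $x$ in the fiber has $\PGL(k_r)$-stabilizer of expected dimension in $F(k_1, \dots, k_{r-1}; k_r)^m$; since the expected dimensions arithmetically match, the two density conditions are equivalent.

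The main obstacle is verifying that, under the chosen identifications of the $m$ fiber factors with $F(k_1, \dots, k_{r-1}; k_r)$, the induced stabilizer action agrees \emph{simultaneously} with the diagonal $\PGL(k_r)$ action on the product. For the factors $i < m$ this is immediate from the conjugation $A_i = \phi_i A \phi_i^{-1}$, but for the $i = m$ factor it requires tracking what the block-diagonal action does to the graph description of $W_{m, k_r}$; this is the step that needs to be written out carefully.
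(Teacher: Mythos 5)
Your proposal is correct, but it takes a genuinely different route from the paper. The paper argues directly on a general configuration: it manufactures the flags of $F(k_1,\dots,k_{r-1};k_r)^m$ inside $W_{m,k_r}$ by spans and intersections (via the auxiliary spaces $U_j=\Span\{W_{i,k_r}: i\le m-1,\ i\neq j\}$), checks this construction is reversible so that sparsity of the small product forces sparsity of the big one, and for the converse compares stabilizers through the restriction homomorphism $S\to S'$, whose kernel is trivial by \cite[Lemma 4.5]{CHZ}, combined with the expected-dimension computation. You instead fiber $F(k_1,\dots,k_r;(m-1)k_r)^m$ over $\Gr(k_r,(m-1)k_r)^m$, exhibit a base configuration (direct-sum summands plus a graph) whose $\PGL((m-1)k_r)$-stabilizer is $\PGL(k_r)$ of dimension $k_r^2-1$, which certifies a dense base orbit, and identify the fiber over it with $F(k_1,\dots,k_{r-1};k_r)^m$ carrying the diagonal $\PGL(k_r)$-action; the obstacle you flag is in fact harmless, since the substitution $v\mapsto A_{m-1}v$ in the graph parametrization shows the induced action on the $i=m$ factor is again by $A_{m-1}$, matching the conjugation on the factors $i<m$. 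What your route buys is that for a point $x$ in the fiber the $\PGL(n)$-stabilizer literally equals the $\PGL(k_r)$-stabilizer, so you need neither the explicit inverse construction nor the injectivity lemma from \cite{CHZ}; the paper's version, on the other hand, never has to discuss the bundle structure or the induced action on a fiber. The one step you should make explicit is the direction ``dense upstairs implies dense on the fiber'': Lemma \ref{lem-basiclem} gives you \emph{some} point of $F(k_1,\dots,k_r;(m-1)k_r)^m$ with stabilizer of expected dimension, and you must move it over your chosen base point $b$; this is routine (the orbit of such a point is dense, so its image in $\Gr(k_r,(m-1)k_r)^m$ is the dense orbit, and a suitable $\PGL(n)$-translate lies over $b$), but as written your final sentence quietly assumes it. With that sentence added, the argument is complete.
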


\begin{proof}
We first compute the expected dimension of the generic stabilizer in each case.
\begin{align*}
& \dim (\PGL((m-1)k_r)) - \dim (F(k_1, \dots, k_r; (m-1)k_r)^m) \\
&=(m-1)^2k_r^2-1 - m \left(\sum_{i=1}^{r-1} k_i (k_{i+1} - k_i) + (m-2)k_r^2\right)\\
&=k_r^2-1- m \sum_{i=1}^{r-1} k_i (k_{i+1} - k_i) \\
&=\dim  (\PGL(k_r)) -  \dim (F(k_1, \dots, k_{r-1}; k_r)^m). 
\end{align*}
We conclude that the expected dimensions of the generic stabilizers are equal for both products. In particular, the action on one is trivially sparse if and only if the action on the other is.

We now give a construction of an element in $F(k_1, \dots, k_{r-1}; k_r)^m$ given a general element in $F(k_1, \dots, k_r; (m-1)k_r)^m$.
For $1 \leq i \leq m$, let $W_{i, k_1} \subset \cdots \subset W_{i, k_r}$ be general partial flags in $F(k_1, \dots, k_r; (m-1)k_r)$. Let $U_j$ be the span of $W_{i, k_r}$ for $1 \leq i \leq m-1$ and $i \not= j$.  Note that $$\dim(U_j) = (m-2)k_r.$$ For $1 \leq j \leq m-1$ and $1 \leq \ell \leq r$, set $T_{j, \ell}$ be the span of $W_{j, k_\ell}$ and $U_j$. Observe that $$\dim(T_{j , \ell}) = (m-2)k_r + k_{\ell}.$$  Set $W_{j, k_\ell}' = T_{j, \ell} \cap W_{m, k_r}$, which is a $k_\ell$-dimensional vector space. Finally, set $W_{m, k_i}' = W_{m, k_i}$. Note that $W_{j, k_r}' = W_{m, k_r}$ for every $j$ since $U_j$ and $W_{j, k_r}$ span the  underlying vector space $V$.

It is easy to see that a general element in $F(k_1, \dots, k_{r-1}; k_r)^m$ arises via this construction. Given $W_{m, k_r} \subset V$ and general partial flags $W_{j, k_1}' \subset \cdots \subset W_{j, k_{r-1}}'$, fix $m-1$ general $k_r$-dimensional vector spaces $W_{j, k_r} \subset V$. As in the previous paragraph, let $U_j$ be the span of $W_{i, k_r}$ for $1 \leq i \leq m-1$ and $i \not= j$. Set $W_{j, k_{\ell}}$ to be the intersection of $W_{j, k_r}$ with the span of $U_j$ and $W_{j, k_r}'$. This is clearly the inverse construction.  Hence, if $F(k_1, \dots, k_{r-1}; k_r)^m$ is not dense, then $F(k_1, \dots, k_r; (m-1)k_r)^m$ cannot be dense. 

To prove the converse, we argue as follows. Let $S$ denote the stabilizer of the vector spaces $W_{j, k_i}$ in $V$. Let $S'$ denote the stabilizer of the vector spaces $W_{j, k_i}' $ in $W_{m, k_r}$. Since $W_{j, k_i}'$ are obtained from $W_{j, k_i}$ by taking spans and intersections, any stabilizer of  the vector spaces $W_{j, k_i}$ in $V$ also stabilizes $W_{j, k_i}'$. We thus get a group homomorphism 
$f:S \to S'.$ By \cite[Lemma 4.5]{CHZ}, the kernel of $f$ is trivial. Hence, we obtain the  inequality 
$\dim(S) \leq \dim(S').$
If the $\PGL(k_r)$ action on $F(k_1, \dots, k_{r-1}; k_r)^m$ is dense, then $$\dim(S') = k_r^2-1- m \sum_{i=1}^{r-1} k_i (k_{i+1} - k_i) \leq \dim(S).$$  
We deduce that we must have equality  in the dimensions and  the $\PGL((m-1)k_r)$ action on $F(k_1, \dots, k_r; (m-1)k_r)^m$ is dense by Lemma \ref{lem-basiclem}. 
\end{proof}

\begin{definition}
Given a sequence of positive integers $0 < k_1 < \cdots <  k_r < n$ and an integer $m > 2$, we define integers $j_\ell(m)$ inductively by descending induction starting with $\ell=r$ as follows. We set $$j_r (m):= n - (m-1) k_r.$$  Then for $0 < \ell < r$, we inductively set $$j_{\ell}(m) = k_{\ell+1} - (m-1)  k_{\ell}  + (m-2)  \sum_{i=\ell+1}^r j_i (m).$$ 
If $k_{\ell} -  \sum_{t=\ell}^r j_t(m)>0$, define $s_{\ell}$ by $$s_{\ell} := \min\left\{ 0 < i \leq \ell \ | \ k_i - \sum_{t=\ell}^r j_t(m)>0\right\};$$ otherwise set $s_{\ell} = \ell$.  Let $$\ell_0 := \max\{\ell\ | \ s_{\ell} \geq \ell -1\}.$$ 

\end{definition}

With this notation, we have the following theorem.

\begin{theorem}\label{thm-mkr}
Let $m>2$ be a positive integer. Let $0< k_1 < \cdots < k_r < n$ be a sequence of integers such that  $j_\ell(m) \geq  0$ for  all $\ell \geq \ell_0$. Then the $\PGL(n)$ action on $F(k_1, \dots, k_r; n)^m$ is dense if one of the following holds:
\begin{enumerate}
\item $j_t(m) \geq k_t - \sum_{\ell = t+1}^r j_{\ell}(m)$ for some $t \geq \ell_0$; or
\item $s_{\ell_0} \geq \ell_0$; or
\item $s_{\ell_0} = \ell_0 -1$ and $m \left(k _{\ell_0 -1} - \sum_{\ell= \ell_0}^r j_\ell(m)\right)(k_{\ell_0} - k_{\ell_0 -1}) < \left(k_{\ell_0}- \sum_{\ell=\ell_0}^r \ell_t(m)\right)^2$.
\end{enumerate}
\end{theorem}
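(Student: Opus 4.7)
The plan is an iterated reduction. The key tool will be a generalization of Proposition \ref{prop-m1}: when the top-level excess $j_r(m) = n - (m-1)k_r$ is nonnegative, the same geometric construction (spanning the top spaces of $m-1$ flags, intersecting with the top space of the $m$-th) produces, from $m$ general flags in $F(k_\bullet; n)$, a new collection of $m$ flags in $F(\tilde k_\bullet; k_r)$ where $\tilde k_\ell = k_\ell - j_r(m)$ for $k_\ell > j_r(m)$ and levels with $k_\ell \leq j_r(m)$ are dropped. The stabilizer-injection argument of Proposition \ref{prop-m1}, based on \cite[Lemma 4.5]{CHZ}, continues to work in this generality and gives the implication: density of the $\PGL(k_r)$-action on the reduced product forces density of the $\PGL(n)$-action on the original.

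Iterating this reduction, at level $\ell$ the ambient dimension becomes $k_\ell$ and all flag dimensions are shifted down by the accumulated excess $S_{\ell+1} := \sum_{t=\ell+1}^r j_t(m)$. The recursive formula $j_\ell(m) = k_{\ell+1} - (m-1)k_\ell + (m-2)S_{\ell+1}$ is precisely the excess at the next stage, namely $(k_{\ell+1} - S_{\ell+1}) - (m-1)(k_\ell - S_{\ell+1})$. The hypothesis $j_\ell(m) \geq 0$ for $\ell \geq \ell_0$ guarantees the reduction proceeds cleanly from level $r$ down to level $\ell_0$, while the quantity $s_\ell$ records how many lower flag dimensions remain positive after subtracting $S_\ell$, cleanly labelling the termination shape at each level.

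For the terminal analysis, the three conditions correspond to distinct termination scenarios. In case (1), $j_t(m) \geq k_t - S_{t+1}$ is equivalent to $S_t \geq k_t$, which after reduction to level $t$ means the sum of flag top-dimensions across the $m$ copies fits inside the ambient, giving density via Lemma \ref{lem-easydense}. In case (2), $s_{\ell_0} \geq \ell_0$ forces the reduction at level $\ell_0$ to yield at most a single Grassmannian self-product $\Gr(k_{\ell_0} - S_{\ell_0}, k_{\ell_0})^m$, and density then follows from Theorem \ref{thm-popov}. In case (3), $s_{\ell_0} = \ell_0 - 1$ leaves a two-step flag self-product $F(k_{\ell_0-1} - S_{\ell_0}, k_{\ell_0} - S_{\ell_0}; k_{\ell_0})^m$, and the displayed inequality $m(k_{\ell_0-1} - S_{\ell_0})(k_{\ell_0}-k_{\ell_0-1}) < (k_{\ell_0}-S_{\ell_0})^2$ is exactly the Popov-type criterion which, applied to the associated pair of Grassmannians, yields density via Theorem \ref{thm-popov}.

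The main obstacle will be rigorously proving the generalized single-step reduction when $j_r(m) > 0$. The difficulty is twofold: first, when $j_r(m) > 0$ the naive construction produces an asymmetric product (the $m$-th factor differs from the others), and recovering a genuine self-product equivalence requires a careful combination with Lemma \ref{lem-reduce}; second, one must check that the stabilizer-injection argument of Proposition \ref{prop-m1} goes through despite the dimension drops, in particular when some $k_\ell \leq j_r(m)$ and those levels are absorbed. Once this single-step reduction is secured, the iteration and three-case analysis are essentially bookkeeping, though the boundary situation $\ell_0 = 1$ in case (3) merits a separate check since there is no lower level to invoke.
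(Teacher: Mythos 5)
Your overall strategy is the paper's: iterate the span--intersect reduction of Proposition \ref{prop-m1}, track the excesses via the recursion for $j_\ell(m)$, and finish with Theorem \ref{thm-popov} (or Lemma \ref{lem-easydense}) at level $\ell_0$. But the single step you build this on is not correct as stated, and this is exactly the point you defer. You claim a ``generalized Proposition \ref{prop-m1}'': from $F(k_\bullet;n)^m$ with $j_r=n-(m-1)k_r\geq 0$ one passes to $F(k_\bullet - j_r;\,k_r)^m$, so that ``at level $\ell$ the ambient dimension becomes $k_\ell$'' with flags shifted by $S_{\ell+1}$. That target has the wrong expected stabilizer dimension whenever $j_r>0$: already for $r=1$, the expected stabilizer dimension of $\Gr(k,(m-1)k+j)^m$ is $k^2+(m-2)kj+j^2-1$, while that of $\Gr(k-j,k)^m$ is $k^2-mj(k-j)-1$, and these agree only for $j=0$. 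So no stabilizer-injection argument can upgrade your construction to the equivalence you need. The correct order of operations (and the paper's) is the opposite of what you sketch: first apply Lemma \ref{lem-reduce} $m$ times to absorb the excess, giving $F(k_{s_r}-j_r,\dots,k_r-j_r;(m-1)(k_r-j_r))^m$, and only then apply Proposition \ref{prop-m1} in its exact form, landing in ambient dimension $k_r-j_r$, not $k_r$. Consequently the correct level-$\ell$ object is $F\bigl(k_{s_\ell}-S_\ell,\dots,k_{\ell-1}-S_\ell;\,k_\ell-S_\ell\bigr)^m$ with $S_\ell=\sum_{u\geq\ell}j_u$; note your own identification of $j_\ell(m)$ as $(k_{\ell+1}-S_{\ell+1})-(m-1)(k_\ell-S_{\ell+1})$ already presupposes this bookkeeping, contradicting your ``ambient $=k_\ell$'' claim.

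The slip propagates into your terminal analysis. In case (2) the reduction does not end at $\Gr(k_{\ell_0}-S_{\ell_0},k_{\ell_0})^m$ (which need not be dense at all; e.g.\ $m=4$, $k_{\ell_0}-S_{\ell_0}=2$, $k_{\ell_0}=4$ is trivially sparse), but at $\Gr(k_{\ell_0}-S_{\ell_0},(m-1)(k_{\ell_0}-S_{\ell_0}))^m$, which is dense by Theorem \ref{thm-popov}. In case (3) the terminal object is not the two-step product $F(k_{\ell_0-1}-S_{\ell_0},k_{\ell_0}-S_{\ell_0};k_{\ell_0})^m$ (to which Theorem \ref{thm-popov} does not apply, and to which Proposition \ref{prop-m1} does not apply either since $k_{\ell_0}\neq(m-1)(k_{\ell_0}-S_{\ell_0})$ in general), but the Grassmannian product $\Gr(k_{\ell_0-1}-S_{\ell_0},\,k_{\ell_0}-S_{\ell_0})^m$ obtained after one more application of Proposition \ref{prop-m1}; the displayed inequality is then precisely the non-trivially-sparse condition of Theorem \ref{thm-popov} for that product. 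Your identification of condition (1) with Lemma \ref{lem-easydense} at the appropriate stage is correct in spirit, and only one implication is needed since the theorem is an ``if'' statement, but the proof is incomplete until the single-step reduction is done in the paper's order (Lemma \ref{lem-reduce} $m$ times, then Proposition \ref{prop-m1} unmodified) and the ambient dimensions are corrected accordingly.
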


\begin{proof}
Since $m$ is fixed throughout the proof, we denote  $j_\ell(m)$ simply by $j_\ell$. We will prove by descending induction on $\ell$ that if $\ell > \ell_0$, then  either the action is dense or $$F(k_1, \dots, k_r; n)^m \Leftrightarrow F\left(k_{s_\ell} - \sum_{u= \ell}^r j_u, \dots, \ k_{\ell -1} - \sum_{u= \ell}^r j_u; \ k_{\ell} - \sum_{u= \ell}^r j_u\right)^m.$$

If $n \geq m k_r$,  then $j_r \geq k_r$ and the $\PGL(n)$ action on $F(k_1, \dots, k_r; n)^m$ is dense by Lemma \ref{lem-easydense}. Otherwise, by assumption $j_r\geq 0$, hence $mk_r > n \geq (m-1) k_r$. By $m$ applications of  Lemma \ref{lem-reduce},we have  $$F(k_1, \dots, k_r; n)^m \Leftrightarrow F(k_{s_r} - j_r, \dots, k_r - j_r; (m-1)(k_r-j_r))^m.$$   If $s_r = r$, then the action is dense by Theorem \ref{thm-popov}. Otherwise,  by Proposition \ref{prop-m1}, $$F(k_{s_r} - j_r, \dots, k_r - j_r; (m-1)(k_r-j_r))^m \Leftrightarrow F(k_{s_r} - j_r, \dots, k_{r-1} - j_r; k_{r} - j_r)^m.$$ This proves the base case of the induction.  If $s_r = r-1$, then the latter is an $m$-fold product of a Grassmannian and by Theorem \ref{thm-popov}, the action is dense if and only if $$m (k_{r-1} - j_r)(k_r-k_{r-1}) < (k_r-j_r)^2.$$ The theorem thus holds for $\ell = r$. 

Suppose that by induction we have $$F(k_1, \dots, k_r; n)^m \Leftrightarrow F\left(k_{s_\ell} - \sum_{u= \ell}^r j_u, \dots, k_{\ell -1} - \sum_{u= \ell}^r j_u; k_{\ell} - \sum_{u= \ell}^r j_u\right)^m.$$ If $j_{\ell -1} \geq k_{\ell-1} - \sum_{u= \ell}^r j_u$, then $$k_{\ell} - \sum_{u= \ell}^r j_u \geq m \left(k_{\ell -1} - \sum_{u= \ell}^r j_u\right)$$ and the action on the latter is dense by Lemma \ref{lem-easydense}. Otherwise, by the assumption that $j_{\ell-1} \geq 0$, we have that  $$k_{\ell} - \sum_{u= \ell}^r j_u \geq (m-1) \left(k_{\ell -1} - \sum_{u= \ell}^r j_u\right).$$
By $m$ applications of  Lemma \ref{lem-reduce},  $$F\left(k_{s_\ell} - \sum_{u= \ell}^r j_u, \dots, k_{\ell -1} - \sum_{u= \ell}^r j_u; k_{\ell} - \sum_{u= \ell}^r j_u\right)^m \Leftrightarrow$$ $$ F\left(k_{s_{\ell-1}} - \sum_{u= \ell-1}^r j_u, \dots, k_{\ell -1} - \sum_{u= \ell-1}^r j_u; (m-1)(k_{\ell -1} - \sum_{u= \ell-1}^r j_u)\right)^m.$$ If $s_{\ell -1} = \ell -1$, then the action is dense by Theorem \ref{thm-popov}. Otherwise, by Proposition \ref{prop-m1}, the density of the action on the latter is equivalent to the density of the action on $$F(k_{s_{\ell-1}} - \sum_{u= \ell-1}^r j_u,\dots; k_{\ell -1} - \sum_{u= \ell-1}^r j_u)^m.$$ This concludes the inductive step. The induction stops when $\ell = \ell_0$. In which case, either the action is dense or we have the action on a product of Grassmannians whose density is determined by Theorem \ref{thm-popov}. This concludes the proof of the theorem.
 \end{proof}
 
 In particular, we obtain the following easier to remember corollary.
 
 \begin{corollary}\label{cor-farapart}
 If $k_i \geq (m-1) k_{i-1}$ for $2 \leq i \leq r+1$, then the $\PGL(n)$ action on $F(k_1, \dots, k_r; n)^m$ is dense.
 \end{corollary}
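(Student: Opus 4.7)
The approach is to induct on $r$, repeating the reduction scheme used in Theorem \ref{thm-mkr} while exploiting the gap hypothesis $k_i \geq (m-1) k_{i-1}$ to ensure each reduction lands in a case where density is already known. The case $m \leq 2$ is trivial, since the action then has only finitely many orbits (Magyar--Weyman--Zelevinsky), so I would assume $m \geq 3$ throughout.

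For the base case $r = 1$, the flag variety $\Gr(k_1, n)^m$ is dense either by Lemma \ref{lem-easydense} (when $n \geq m k_1$) or, when $(m-1) k_1 \leq n < m k_1$, by $m$-fold application of Lemma \ref{lem-reduce} to obtain $\Gr(k_1 - j_1, (m-1)(k_1 - j_1))^m$ (with $j_1 := n - (m-1) k_1$) and appealing to Theorem \ref{thm-popov}. The exceptional case $\Gr(k-1, 2k-1)^2 \times \Gr(k, 2k-1)^2$ in Popov's theorem is ruled out because we have $m \geq 3$ identical Grassmannian factors, not a $2$--$2$ mixture.

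For the inductive step with $r \geq 2$, if $n \geq m k_r$ I am done by Lemma \ref{lem-easydense}. Otherwise $0 \leq j_r := n - (m-1) k_r < k_r$; set $s := \min\{i : k_i > j_r\} \in \{1, \dots, r\}$. Iterating Lemma \ref{lem-reduce} $m$ times exactly as in the proof of Theorem \ref{thm-mkr} yields
$$F(k_1, \dots, k_r; n)^m \;\Leftrightarrow\; F(k_s - j_r, \dots, k_r - j_r;\, (m-1)(k_r - j_r))^m.$$
If $s = r$ this is a single self-product of a Grassmannian, dense as in the base case. Otherwise Proposition \ref{prop-m1} further rewrites it as $F(k_s - j_r, \dots, k_{r-1} - j_r;\, k_r - j_r)^m$, a product of flag varieties of length $r - s \leq r - 1$, to which the inductive hypothesis should apply.

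The only real obstacle is verifying that the shifted sequence still satisfies the gap hypothesis so the induction closes. For each relevant $j$, the required inequality $k_j - j_r \geq (m-1)(k_{j-1} - j_r)$ rearranges to $k_j \geq (m-1) k_{j-1} - (m-2) j_r$, which is immediate from $k_j \geq (m-1) k_{j-1}$ together with $(m-2) j_r \geq 0$. The positivity $k_s - j_r > 0$ is automatic from the definition of $s$. These checks cover both the intermediate gaps and the top ambient gap $k_r - j_r \geq (m-1)(k_{r-1} - j_r)$, so the reduced flag data satisfies the hypotheses of the corollary at strictly smaller $r$, completing the induction.
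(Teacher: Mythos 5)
Your argument is correct and follows essentially the same route as the paper: the corollary is the specialization of Theorem \ref{thm-mkr}, whose proof consists of exactly the reductions you perform (Lemma \ref{lem-easydense}, $m$-fold application of Lemma \ref{lem-reduce}, Proposition \ref{prop-m1}, Theorem \ref{thm-popov}), and your observation that the gap condition is preserved under the shift by $j_r$ is precisely the check needed to make that specialization legitimate. The only point you leave unstated is that the terminal pure power $\Gr(k',(m-1)k')^m$ is not trivially sparse, but this is the immediate computation $(m-1)^2k'^2-1-m(m-2)k'^2=k'^2-1\geq 0$, so nothing essential is missing.
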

 
 Applying Theorem \ref{thm-mkr} when $r=2$, we obtain the following.
 
 \begin{corollary}
 Let $m > 3$ and assume that $n \geq (m-1)k_2$. Then the $\PGL(n)$ action on $F(k_1, k_2; n)^m$ is dense if and only if one of the following holds:
 \begin{enumerate}
 \item $k_1 \leq j_2(m)$; or
 \item $0 \leq j_2(m) < k_1$ and $m(k_1-j(m))(k_2-k_1) < (k_2- j(m))^2$.
 \end{enumerate}
 \end{corollary}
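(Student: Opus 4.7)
The plan is to specialize Theorem \ref{thm-mkr} to $r=2$ and trace the reduction all the way down to a single-Grassmannian self-product, where Theorem \ref{thm-popov} pins down density. The standing hypothesis $n \geq (m-1)k_2$ is exactly $j_2(m) \geq 0$, which is what is required before any reduction machinery can be invoked. I would begin by disposing of the degenerate subcase $j_2(m) \geq k_2$: this means $n \geq m k_2$, so Lemma \ref{lem-easydense} gives density immediately, and condition (1) is automatic since $k_1 < k_2 \leq j_2(m)$. Hence I may assume $0 \leq j_2(m) < k_2$.

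Next, I apply Lemma \ref{lem-reduce} $m$ times exactly as in the first paragraph of the proof of Theorem \ref{thm-mkr}, obtaining
\[ F(k_1, k_2; n)^m \Leftrightarrow F(k_{s_2} - j_2(m),\, \ldots,\, k_2 - j_2(m);\, (m-1)(k_2 - j_2(m)))^m, \]
where $s_2 = 1$ iff $k_1 > j_2(m)$ and $s_2 = 2$ iff $k_1 \leq j_2(m)$. Then Proposition \ref{prop-m1} collapses the top step of the flag to leave an $m$-fold self-product of a single Grassmannian, namely $\Gr(k_{s_2} - j_2(m),\, k_2 - j_2(m))^m$.

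At this point Theorem \ref{thm-popov} is decisive. Its exceptional sparse case $\Gr(k-1, 2k-1)^2 \times \Gr(k, 2k-1)^2$ involves two distinct Grassmannian factors and therefore never appears inside an $m$-fold self-product of a single Grassmannian, so density of the reduced product is equivalent to it not being trivially sparse. In Case (1) ($k_1 \leq j_2(m)$, so $s_2 = 2$), the reduced product is $\Gr(k_2 - j_2(m),\, (m-1)(k_2 - j_2(m)))^m$, and the trivial-sparsity inequality collapses to the trivially true $(m-1)^2 \geq m(m-2)$, giving density. In Case (2) ($k_1 > j_2(m)$, so $s_2 = 1$), trivial sparsity is the inequality $(k_2 - j_2(m))^2 \leq m(k_1 - j_2(m))(k_2 - k_1)$, whose negation is precisely the inequality in condition (2); this yields both directions of the ``if and only if''.

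The main bookkeeping obstacle is confirming that the $m$-fold application of Lemma \ref{lem-reduce} for $r=2$ really does collapse into the form claimed, and verifying that Proposition \ref{prop-m1} is applicable at the last step (which requires the ambient dimension to be exactly $(m-1)$ times the top index, as it is here). Once those are in place, the conclusion is essentially a single dimension count, with the exceptional case of Theorem \ref{thm-popov} excluded for structural reasons.
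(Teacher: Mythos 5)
Your argument is correct and is essentially the paper's own proof: dispose of $n\geq mk_2$ via Lemma \ref{lem-easydense}, apply Lemma \ref{lem-reduce} $m$ times, collapse with Proposition \ref{prop-m1} when $j_2(m)<k_1$, and finish with Theorem \ref{thm-popov}, whose exceptional case cannot occur for a self-product. The only (harmless) imprecisions are that in Case (1) you apply Theorem \ref{thm-popov} directly to $\Gr(k_2-j_2(m),(m-1)(k_2-j_2(m)))^m$ rather than after a Proposition \ref{prop-m1} collapse, exactly as the paper does, and the non-trivial-sparsity check there really reduces to $(k_2-j_2(m))^2\geq 1$ rather than $(m-1)^2\geq m(m-2)$.
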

 
 \begin{proof}
Set $j_2 = j_2(m)$.  By Lemma \ref{lem-easydense}, the $\PGL(n)$ action on $F(k_1, k_2; n)^m$ is dense if $n \geq m k_2$. Otherwise, by assumption, $0 \leq j_2< k_2$. If $k_1 \leq j_2$, then by $m$ applications of Lemma \ref{lem-reduce}, $$F(k_1, k_2; n)^m \Leftrightarrow \Gr(k_2-j_2, (m-1)(k_2-j_2))^m.$$  The action on the latter is dense by Theorem \ref{thm-popov}. If $0 \leq j_2 < k_1$, then by $m$ applications of Lemma \ref{lem-reduce}, $$F(k_1, k_2; n)^m \Leftrightarrow F(k_1 -j_2, k_2-j_2; (m-1)(k_2-j_2))^m.$$  
By Proposition  \ref{prop-m1},
 $$F(k_1 -j_2, k_2-j_2; (m-1)(k_2-j_2))^m \Leftrightarrow \Gr(k_1-j_2, k_2 - j_2)^m.$$ 
 The action on the latter is dense if and only if  $m (k_1-j_2)(k_2-k_1) < (k_2-j_2)^2$ by Theorem \ref{thm-popov}. This concludes the proof.
 \end{proof}
 
 Similarly, applying Theorem \ref{thm-mkr} when $r=3$ and $m=3$, we obtain the following.
 
 \begin{corollary}
 Let $n \geq 2k_3$. Then the $\PGL(n)$ action on $F(k_1, k_2, k_3; n)^3$ is dense if and only if one of the following holds:
 \begin{enumerate}
  \item $k_1 \leq j_3(3)$; or
 \item $0\leq  j_3(3) < k_1$ and $k_1 + k_2 \not= k_3 + j_3(3)$.
 \end{enumerate}
 \end{corollary}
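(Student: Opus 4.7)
\medskip

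\noindent\textbf{Proof plan.} Set $j := j_3(3) = n - 2k_3$, so by hypothesis $j \geq 0$. The plan is to reduce the product $F(k_1, k_2, k_3; n)^3$ to a simpler product whose density is controlled by Theorem \ref{thm-three2step} or Theorem \ref{thm-popov}, and then to do a small case analysis on the position of $j$ among $k_1, k_2, k_3$.

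First I dispose of $n \geq 3k_3$ (equivalently $j \geq k_3 > k_1$) directly from Lemma \ref{lem-easydense}: the action is dense, and condition (1) of the corollary holds. So from here on I assume $2k_3 \leq n < 3k_3$, i.e., $0 \leq j < k_3$. Applying Lemma \ref{lem-reduce} three times to successively absorb the ``extra'' $j$ dimensions into each of the three factors (exactly as in the proof of Theorem \ref{thm-mkr}), I obtain
$$F(k_1, k_2, k_3; n)^3 \ \Leftrightarrow\ F(k_\bullet'; 2(k_3 - j))^3,$$
where $k_\bullet'$ is the sequence derived from $(k_1, k_2, k_3)$ with respect to $j$.

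Now the key step is a case split based on which of the $k_i$ are $> j$:
\begin{enumerate}
\item[(A)] If $0 \leq j < k_1$, then $k_\bullet' = (k_1 - j,\ k_2 - j,\ k_3 - j)$, a full three-step flag in $2(k_3 - j)$ dimensions. Applying Proposition \ref{prop-m1} collapses this to $F(k_1 - j,\ k_2 - j;\ k_3 - j)^3$, and Theorem \ref{thm-three2step} gives density if and only if $(k_1 - j) + (k_2 - j) \neq k_3 - j$, i.e. $k_1 + k_2 \neq k_3 + j$. This is exactly condition (2).
\item[(B)] If $k_1 \leq j < k_2$, then $k_\bullet' = (k_2 - j,\ k_3 - j)$, and Proposition \ref{prop-m1} reduces to $\Gr(k_2 - j,\ k_3 - j)^3$.
\item[(C)] If $k_2 \leq j < k_3$, then $k_\bullet' = (k_3 - j)$ and we arrive at $\Gr(k_3 - j,\ 2(k_3 - j))^3$ directly.
\end{enumerate}
In both (B) and (C) I use Theorem \ref{thm-popov} (with $a = 0$) to conclude density; the only things to check are that neither Grassmannian cube is trivially sparse (a one-line dimension count using $3k(n-k) \leq 3n^2/4 < n^2 - 1$ for $n \geq 2$) and that neither coincides with the excluded product $\Gr(k-1, 2k-1)^2 \times \Gr(k, 2k-1)^2$, which is immediate since that product has four factors. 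Cases (B) and (C) together cover exactly the subcases of $k_1 \leq j$ with $j < k_3$, which combined with the initial $j \geq k_3$ case account for all of condition (1).

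The main obstacle is really just the bookkeeping in the case split: one has to verify that in cases (B) and (C) the reduced Grassmannian product is \emph{always} dense (no hidden sparse subcases), and that the three successive applications of Lemma \ref{lem-reduce} can be performed, which requires checking at each step that the inequality $n'' \leq n_{\text{current}} < \sum k_{i,r_i}$ holds. Both are straightforward since $0 < k_3 - j$ throughout, and the rest is routine bookkeeping.
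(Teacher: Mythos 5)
Your argument is correct and follows essentially the same route as the paper: the same three applications of Lemma \ref{lem-reduce}, the same case split on the position of $j_3(3)$ relative to $k_1$ and $k_2$, Proposition \ref{prop-m1} together with Theorem \ref{thm-three2step} in the case $j_3(3)<k_1$, and density of the resulting Grassmannian cubes in the remaining cases (the paper cites Theorem \ref{thm-Grassmannian4}, which handles triple products of Grassmannians with no dimension check, where you cite Theorem \ref{thm-popov}; both work). The only quibble is that your strict inequality $3n^2/4 < n^2-1$ fails at $n=2$, but the weak inequality $3k(n-k)\le n^2-1$ is all that is needed to rule out trivial sparsity, so the conclusion is unaffected.
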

 
 \begin{proof}
 For simplicity set $j_3(3) = j_3$. By assumption, $n \geq 2k_3$. If $n \geq 3k_3$, or equivalently, if $j_3 \geq k_3$, then the action on $F(k_1, k_2, k_3; n)^3$ is dense by Lemma 
\ref{lem-easydense}.  Otherwise, we can write $n = 2k_3 + j_3$ for some $0 \leq j_3 < k_3$. In this case, we apply Lemma \ref{lem-reduce} three times. If $j_3 < k_1$, then  $$F(k_1, k_2, k_3; n)^3 \Leftrightarrow F(k_1- j_3, k_2-j_3, k_3 - j_3; 2k_3-2j_3)^3.$$ By Proposition \ref{prop-m1}, 
$$F(k_1- j_3, k_2-j_3, k_3 - j_3; 2k_3-2j_3)^3 \Leftrightarrow F(k_1 - j_3, k_2-j_3; k_3-j_3)^3.$$ 
By Theorem \ref{thm-three2step}, the action on the latter is dense if and only if $k_1 - j_3 + k_2 -j_3 \not= k_3 -j_3$ or equivalently if $k_1 + k_2 \not= k_3 + j_3$. If $j_3 \geq k_2$, then applying Lemma \ref{lem-reduce} three times, $$F(k_1, k_2, k_3; n)^3 \Leftrightarrow \Gr(k_3-j_3, 2(k_3 - j_3))^3.$$ The action on the latter is always dense by Theorem \ref{thm-Grassmannian4}. If instead, $k_1 \leq j_3 < k_2$, then applying Lemma \ref{lem-reduce} three times, $$F(k_1, k_2, k_3; n)^3 \Leftrightarrow F(k_2-j_3, k_3-j_3; 2(k_3 - j_3))^3.$$ By Proposition \ref{prop-m1}, $$F(k_2-j_3, k_3-j_3; 2(k_3 - j_3))^3 \Leftrightarrow \Gr(k_2-j_3, k_3-j_3)^3.$$ The action on the latter is dense by Theorem \ref{thm-Grassmannian4}. We thus conclude that if $j_3 \geq k_1$, then the action on $F(k_1, k_2, k_3; n)^3$ is dense. 
 \end{proof}

\subsection{The case $k_{i+1} - k_i=1$}  Let $k_i = \ell + i$ for $1 \leq i \leq r$. We will denote this partial flag variety by $F(\ell+1, \dots, \ell+r; n)$ for simplicity.

\begin{theorem}\label{thm-1apart}
Let $k_i= \ell + i$ for $1 \leq i \leq r$. Then the action on $F(\ell+1, \dots, \ell+r; n)^3$ is dense if and only if one of the following holds
\begin{enumerate}
\item $r=1$; or
\item $3\ell + 3r \leq n$; or
\item $n= 3\ell + 3r-2$; or
\item $2\ell + 2r \leq n \leq 3\ell + 2r$ and either $r=2$ or  $3\ell + 5\geq n$; or
\item $n \leq 2\ell + r$ and $2n \leq 3\ell + 3$
\item $n \leq 2\ell + r$ and $2n = 3\ell + 5$
\item $2n-2\ell -2 \leq n \leq 3n-3\ell-r-3$ and either $r=2$ or $2n \geq 3\ell + 3r -2$.
\end{enumerate}
\end{theorem}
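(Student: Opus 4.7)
The plan is to combine the duality Lemma \ref{lem-duality}, the reduction Lemma \ref{lem-reduce}, Proposition \ref{prop-m1}, and the classifications in Theorem \ref{thm-Grassmannian4}, Theorem \ref{thm-popov}, and Theorem \ref{thm-three2step}, proceeding by cases on $n$. I would first dispose of the easy extremes: when $r=1$ the problem is $\Gr(\ell+1,n)^3$, which is never trivially sparse and hence dense by Theorem \ref{thm-Grassmannian4}; when $n \geq 3(\ell+r)$, density follows from Lemma \ref{lem-easydense}. By Lemma \ref{lem-duality} the problem is invariant under the involution $\ell \leftrightarrow n-\ell-r-1$, which exchanges cases (2)--(4) with cases (5)--(7), so it suffices to treat the range $2\ell+r+1 \leq n < 3(\ell+r)$.

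The middle window $2\ell + r < n < 2\ell + 2r$ is immediately handled by Corollary \ref{cor-three2step}: for $r \geq 2$, pick $1 \leq i < j \leq r$ with $i+j = n-2\ell \in \{r+1,\dots,2r-1\}$; then $k_i + k_j = n$ and the action is sparse. This accounts for every value of $n$ in this window, none of which appears among the dense cases of the theorem.

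The heart of the argument is the regime $2\ell + 2r \leq n < 3(\ell+r)$. Setting $d = n-2\ell-2r$, one application of Lemma \ref{lem-reduce} yields
$$F(\ell+1,\dots,\ell+r; n)^3 \Leftrightarrow F(\ell+1,\dots,\ell+r; 2\ell+2r)^2 \times F(k'_\bullet; 2\ell+2r),$$
where $k'_\bullet = (\ell+1-d,\dots,\ell+r-d)$ when $d \leq \ell$ and $k'_\bullet = (1,2,\dots,\ell+r-d)$ when $\ell < d < \ell+r$. Since the first two factors fill the ambient space, I would fix them to be in direct sum; their joint stabilizer is then block-diagonal with two blocks of size $\ell+r$, each carrying an induced flag of the relevant type. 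Projecting the third flag onto these two blocks and iterating Lemma \ref{lem-reduce} and Proposition \ref{prop-m1} where symmetry permits, the density question reduces to a $\PGL(\ell+r)$-action on a product of (at most) 2-step flag varieties or Grassmannians, which is resolved by Theorem \ref{thm-three2step} and Theorem \ref{thm-popov}.

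The main obstacle will be the fine bookkeeping at the end of the previous step: identifying exactly when the reduced product falls into one of the excluded configurations of Theorem \ref{thm-popov} (the $\Gr(k-1,2k-1)^2 \times \Gr(k,2k-1)^2$ family) or of Theorem \ref{thm-three2step} (where $k_1 + k_2$ equals the ambient dimension). These failures occur only at very precise values of $n$ and are responsible for both the singular dense value $n = 3\ell + 3r - 2$ in case (3) and the $r = 2$ versus $r > 2$ dichotomy in case (4); the corresponding statements (6) and the dual part of (7) then follow from the duality used in the first step.
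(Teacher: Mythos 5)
Your skeleton is the paper's: dispose of $r=1$ via Theorem \ref{thm-Grassmannian4} and $n\geq 3\ell+3r$ via Lemma \ref{lem-easydense}, kill the window $2\ell+r<n<2\ell+2r$ with Corollary \ref{cor-three2step}, handle $n\leq 2\ell+r$ by the duality $\ell\leftrightarrow n-\ell-r-1$ (Lemma \ref{lem-duality}), and reduce the remaining range $2\ell+2r\leq n<3\ell+3r$ with Lemma \ref{lem-reduce} and Proposition \ref{prop-m1}. The genuine gap is in your endgame. Setting $j=n-2\ell-2r$, the reduction terminates (as in the paper) at $F(\ell+1-j,\dots,\ell+r-1-j;\ell+r-j)^3$, which by duality is $F(1,2,\dots,r-1;\ell+r-j)^3$ when $j\leq\ell$, and at the full flag variety $F(1,2,\dots,K-1;K)^3$ with $K=\ell+r-j$ when $j>\ell$. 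For $r\geq 4$ these have three or more steps, so their density is \emph{not} decided by Theorem \ref{thm-three2step} (two-step flags only) or Theorem \ref{thm-popov} (Grassmannians only), contrary to your claim that the question "reduces to a product of (at most) 2-step flag varieties or Grassmannians." The paper closes this by invoking its own Theorem \ref{thm-1r}, and the decisive mechanism there is the quadratic dimension count: $F(1,\dots,r-1;N)^3$ is trivially sparse unless $r-1=1$ or $N\geq 3(r-1)-2$. That inequality, translated back, is exactly the threshold $3\ell+5\geq n$ in case (4), and the full-flag computation gives the isolated value $n=3\ell+3r-2$ in case (3). So your attribution of these phenomena to the excluded family $\Gr(k-1,2k-1)^2\times\Gr(k,2k-1)^2$ of Theorem \ref{thm-popov} and to the $k_1+k_2=n$ obstruction of Theorem \ref{thm-three2step} is wrong, and without Theorem \ref{thm-1r} (or redoing its dimension count) your argument cannot conclude for $r\geq 4$.

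A secondary, repairable issue: Proposition \ref{prop-m1} applies to self-products of the shape $F(k_1,\dots,k_r;(m-1)k_r)^m$, but after a single application of Lemma \ref{lem-reduce} your product is no longer a self-product, so the proposition does not directly apply. The paper restores the symmetric shape by applying Lemma \ref{lem-reduce} three times (once per factor), obtaining $F(k'_\bullet;2(\ell+r-j))^3$, and only then uses Proposition \ref{prop-m1}. Your alternative device of fixing the first two factors in direct sum and "projecting the third flag onto the blocks" is not among the quoted lemmas; to make it rigorous you would need the stabilizer-comparison argument from the proof of Proposition \ref{prop-m1} (the injectivity of the restriction map via \cite[Lemma 4.5]{CHZ}), since a priori the block picture only bounds the stabilizer in one direction.
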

 
 \begin{proof}
\noindent {\bf Step 1.} The $\PGL(n)$ action on the product $F(\ell+1, \dots, \ell+r; n)^3$ is dense if $r=1$ by Theorem \ref{thm-Grassmannian4}. This is Case (1) of the theorem.
We may therefore assume that $r\geq2$.

\noindent {\bf Step 2.}  First, assume that  $n -\ell - 1 \geq \ell + r$, equivalently, $n \geq 2\ell + r + 1$.   If $n = 2\ell + t$ for $r+1 \leq t  \leq 2r-1$, then the action on $F(\ell+1, \dots, \ell+r; n)^3$  is not dense by Corollary \ref{cor-three2step}. Hence, we may assume that $n \geq 2\ell + 2r$. If $n \geq 3\ell + 3r$, then the action on $F(\ell+1, \dots, \ell+r; n)^3$ is dense by Lemma \ref{lem-easydense}. This is Case (2) of the theorem.

If instead  $2\ell + 2r \leq n < 3\ell + 3r$, then let $j = n - 2\ell - 2r$. 

{\bf Case 1: $j \leq \ell$.} If $j \leq \ell$, then by 3 applications of Lemma \ref{lem-reduce} followed by an application of Proposition \ref{prop-m1}, we have  
\begin{align*}
F(\ell+1, \dots, \ell+r; n)^3 &\Leftrightarrow F(\ell+1-j, \dots, \ell+r-j; 2(\ell+r-j))^3 \\ & \Leftrightarrow F(\ell+1-j, \dots, \ell+r-1-j; \ell+r-j)^3.
\end{align*}
By Lemma \ref{lem-duality},
$$F(\ell+1-j, \dots, \ell+r-1-j, \ell+r-j)^3 \Leftrightarrow F(1, \dots, r-1; \ell+r-j)^3.$$
By Theorem \ref{thm-1r}, this is dense if and only if $r=2$ or $r>2$ and $\ell + r - j \geq 3r-5$. The latter inequality is $3\ell + 5 \geq n$ and yields Case (4) of the theorem.

{\bf Case 2: $\ell+r > j > \ell$.} If $j > \ell$, then by 3 applications of Lemma \ref{lem-reduce} followed by an application of Lemma \ref{lem-duality}, we have 
\begin{align*}
F(\ell+1, \dots, \ell+r; n)^3 &\Leftrightarrow F(1, 2, \dots, \ell+r-j; 2(\ell+r-j))^3 \\ &\Leftrightarrow F(1, 2, \dots, \ell+r-1-j;  \ell+r-j)^3
\end{align*}
The latter is dense if and only if $\ell + r - j = 2$ by Theorem \ref{thm-1r}. This is Case (3) of the theorem.

\noindent {\bf Step 3.} If $n -\ell - 1 < \ell + r$, or equivalently, $n \leq 2\ell + r$, then by Lemma \ref{lem-duality} we have 
$$F(\ell+1, \dots, \ell+r; n)^3 \Leftrightarrow F(n-\ell-r, \dots, n-\ell -1; n)^3.$$ Setting $\ell' = n-\ell -r -1$,  this becomes $F(\ell' + 1, \dots , \ell' + r; n)^3$ and we can apply Step 2.  The remaining cases (5), (6), (7) follow by this duality. 
 \end{proof}
 
 When $m \geq 4$, we obtain the following.
 
 \begin{proposition}\label{prop-1apart}
 Assume that $m \geq 4$ and $n \geq (m-1) (\ell + r)$.  Set $j = n - (m-1)(\ell+r)$. Then $F(\ell+1, \dots, \ell+r; n)^m$ is dense if and only if one of the following holds:
 \begin{enumerate}
  \item $n \geq m(\ell + r)$; or
 \item $r=1$ and $m(\ell+1)(n-\ell-1) < n^2$; or
  \item $0 \leq j < \ell$ and either $r=2$ and $m \leq \ell-j+r+1$ or $r > 2$ and $\ell-j+r \geq mr-m-1$.
 \end{enumerate}
 
 \end{proposition}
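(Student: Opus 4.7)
The plan is to reduce the problem to an application of Theorem~\ref{thm-1r} using the cascade of Lemma~\ref{lem-reduce}, Proposition~\ref{prop-m1}, and Lemma~\ref{lem-duality}, mirroring the structure of the proof of Theorem~\ref{thm-1apart}. First, I would dispense with the two boundary cases. Case~(1) is immediate from Lemma~\ref{lem-easydense}, since when $n \geq m(\ell+r)$ the sum of top dimensions across the $m$ factors is at most $n$. Case~(2) is the Grassmannian situation $\Gr(\ell+1, n)^m$: Theorem~\ref{thm-popov} says the action is dense if and only if the product is not trivially sparse (the excluded product $\Gr(k-1, 2k-1)^2 \times \Gr(k, 2k-1)^2$ cannot arise as a self-product of a single Grassmannian), and the non-trivial-sparseness condition $n^2 - 1 \geq m(\ell+1)(n - \ell - 1)$ is equivalent to the strict inequality $m(\ell+1)(n-\ell-1) < n^2$.

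For case~(3), I assume $r \geq 2$ and $(m-1)(\ell+r) \leq n < m(\ell+r)$, so $0 \leq j < \ell + r$. The key step is to apply Lemma~\ref{lem-reduce} $m$ consecutive times, each time designating a remaining copy of $F(\ell+1, \ldots, \ell+r; \cdot)$ as the factor to be reduced. The first reduction lowers the ambient dimension from $n$ to $(m-1)(\ell+r) = n - j$, and an inductive argument shows that each subsequent reduction again lowers it by $j$. Restricting to the sub-range $0 \leq j \leq \ell$, the index $u$ in the definition of the derived sequence equals $1$ (since $k_0 = 0 \leq j < \ell + 1 = k_1$), so the derived sequence is $(\ell+1-j, \ldots, \ell+r-j)$. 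After $m$ reductions the product becomes $F(\ell+1-j, \ldots, \ell+r-j; (m-1)(\ell+r-j))^m$.

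Next, Proposition~\ref{prop-m1} converts this into $F(\ell+1-j, \ldots, \ell+r-1-j; \ell+r-j)^m$, and Lemma~\ref{lem-duality} dualizes it into $F(1, 2, \ldots, r-1; \ell+r-j)^m$. I then apply Theorem~\ref{thm-1r} with $r - 1$ steps in ambient dimension $\ell + r - j$: case~(ii) applies when $r = 2$ and requires $\ell+r-j \geq m-1$, i.e., $m \leq \ell - j + r + 1$; case~(iv) applies when $r > 2$ and requires $\ell+r-j \geq m(r-1) - 1$, i.e., $\ell - j + r \geq mr - m - 1$. These match the conditions in case~(3). The boundary value $j = \ell$, which is excluded by the strict inequality $j < \ell$, reduces to $F(1, 2, \ldots, r-1; r)^m$, which Theorem~\ref{thm-1r} rules out as sparse whenever $m \geq 4$ and $r \geq 2$ (since both cases (ii) and (iv) would require $r \leq (m+1)/(m-1) < 2$).

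The main obstacle will be the sparse direction for the remaining range $\ell < j < \ell + r$. Here the derived sequence is $(1, 2, \ldots, r-s)$ with $s = j - \ell$, and the analogous cascade yields the equivalence with $F(1, 2, \ldots, r-s-1; r-s)^m$. Applying Theorem~\ref{thm-1r}: case~(iv) requires $r - s \geq m(r - s - 1) - 1$, which rearranges to $r - s \leq (m+1)/(m-1) < 2$ for $m \geq 4$, contradicting $r - s - 1 > 1$; case~(ii) forces $r - s - 1 = 1$ together with $r - s \geq m - 1 \geq 3$, also contradictory. Careful bookkeeping of the $m$ iterated reductions, in particular verifying at every step that the current top-dimension sum and target $n'$ satisfy the hypotheses of Lemma~\ref{lem-reduce}, and correctly tracking how the derived sequence interacts with factors of differing top dimensions, will be the technical heart of the argument.
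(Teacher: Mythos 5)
Your proposal follows essentially the same route as the paper's proof: dispose of case (1) by Lemma \ref{lem-easydense} and case (2) by Theorem \ref{thm-popov}, then for $r\geq 2$ apply Lemma \ref{lem-reduce} $m$ times, Proposition \ref{prop-m1}, and Lemma \ref{lem-duality} to land on $F(1,\dots,r-1;\ell-j+r)^m$ when $0\leq j<\ell$ (resp.\ $F(1,\dots,r-s-1;r-s)^m$ when $j\geq\ell$), and finish with Theorem \ref{thm-1r}. The bookkeeping you defer (each of the $m$ reductions drops the ambient dimension by exactly $j$ and reproduces the same derived sequence) is exactly what the paper also leaves implicit, and it does check out.

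The one place your argument does not close is the same place the paper's does not: in the range $\ell\leq j<\ell+r$ your case analysis treats $r-s\geq 3$ (via case (iv) of Theorem \ref{thm-1r}) and $r-s=2$ (via case (ii)), but omits $r-s=1$, i.e.\ $j=\ell+r-1$, i.e.\ $n=m(\ell+r)-1$. There the cascade terminates at $F(1;m-1)^m=(\PP^{m-2})^m$, on which $\PGL(m-1)$ acts with a dense orbit, so the claimed sparsity fails rather than being a missing verification; the paper's assertion that the reduced product ``is never dense'' for $m\geq 4$ breaks down in exactly this degenerate case, and indeed Theorem \ref{thm-1r}(iv) with $\ell=0$, $n=mr-1$ (or the $r=2$ corollary of Theorem \ref{thm-mkr} with $k_1\leq j_2(m)$) exhibits density at $n=m(\ell+r)-1$, contradicting the proposition as stated. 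So this is a defect of the statement (it needs the extra dense case $n=m(\ell+r)-1$, equivalently $j=\ell+r-1$) rather than of your method; apart from flagging that subcase, your proof is as complete as the paper's own.
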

 
 \begin{proof}
 If $n \geq m(\ell + r)$, then $F(\ell+1, \dots, \ell+r; n)^m$ is dense by Lemma \ref{lem-easydense}. We may therefore assume that $n < m (\ell + r)$.  If $r=1$, then $F(\ell+1, \dots, \ell+r; n)^m$ is dense if and only if $m (\ell + 1)(n-\ell -1) < n^2$ by Theorem \ref{thm-popov}. We may therefore assume $r>1$.  Set $j= n - (m-1) (\ell + r)$.
 By assumption $0 \leq j < \ell + r$.
 
 If $0 \leq j < \ell$, then by $m$ applications of Lemma \ref{lem-reduce} followed by an application of Proposition \ref{prop-m1} and Lemma \ref{lem-duality}, we have
 \begin{align*}
 F(\ell+1, \dots, \ell+r; n)^m &\Leftrightarrow F(\ell-j+1, \dots, \ell-j+r; (m-1)(\ell-j+r))^m \\ &\Leftrightarrow F(\ell-j+1, \dots, \ell - j + r-1; \ell -j + r)^m \\ & \Leftrightarrow F(1, \dots, r-1; \ell-j+r)^m
  \end{align*}
 By Theorem \ref{thm-1r}, the latter is dense if and only if $r=2$ and $m \leq \ell-j+r+1$ or $r> 2$ and $\ell-j+r \geq mr-m-1$.
 
 If $\ell \leq j < \ell+r$, then by $m$ applications of Lemma \ref{lem-reduce} followed by an application of Proposition \ref{prop-m1}, we have 
  \begin{align*}
 F(\ell+1, \dots, \ell+r; n)^m &\Leftrightarrow F(1, \dots, \ell-j+r; (m-1)(\ell-j+r))^m \\ &\Leftrightarrow F(1, 2, \dots, \ell-j+r-1; \ell -j + r)^m
  \end{align*}
  Since $m \geq 4$, the latter is never dense. This concludes the proof of the proposition.
 \end{proof}

 \section{Sparsity results}\label{sec-sparse}
In this section, we discuss further examples of the density and sparsity of products of partial flag varieties.

\begin{proposition} \label{prop-sparse}
Let $k_i < k_j$ be two indices such that $k_j = n-s$, where $s = k_i t + u$ for some $t \geq 1$ and $0 \leq u < k_i$. Then  the $\PGL(n)$ action on $F(k_1, \dots, k_r; n)^m$ is not dense if one of the following holds:
\begin{enumerate}
\item $u=0$ and $m \geq \max(t+2, 4)$.
\item $0 < u \leq  \frac{k_i}{2}$, $t\geq 3$  and $m \geq t+2$.
\item $\frac{k_i}{2} < u <k_i$, $t\geq 2$  and $m \geq t+3$.
\end{enumerate}
\end{proposition}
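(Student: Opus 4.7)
By projecting the product onto its two factors indexed by $i$ and $j$, density of $F(k_1, \dots, k_r; n)^m$ would imply density of $F(k_i, k_j; n)^m$; hence it suffices to prove sparsity of the latter. Since sparsity is preserved under enlarging $m$ (as density is preserved under projection to fewer factors), we may assume that $m$ equals the minimum value stated in each case.

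The case $u=0$, $t=1$ of (1) is immediate from Corollary \ref{cor-three2step}, since $k_i + k_j = n$ forces sparsity already at $m \geq 3$. For the remaining subcases of (1), (2), and (3), our plan is to use the reduction techniques developed in this section: iterated applications of Lemma \ref{lem-reduce} (following the $m$-fold iteration template from the proof of Theorem \ref{thm-mkr}), combined with Proposition \ref{prop-m1} and, where helpful, duality (Lemma \ref{lem-duality}), in order to obtain an equivalence of the form $F(k_i, k_j; n)^m \Leftrightarrow \Gr(a,b)^m$ for certain integers $a<b$ determined by $(k_i, k_j, n, m)$. Sparsity of this Grassmannian product is then read off from Theorem \ref{thm-popov}: the trivial-sparsity inequality $m a(b-a) \geq b^2$ is precisely the condition that the proposition's lower bounds on $m$ are designed to guarantee.

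The three cases differ in the arithmetic between $s = tk_i + u$ and $n$, which governs the derived sequence produced by Lemma \ref{lem-reduce}. When $u = 0$, the identity $tk_i + k_j = n$ yields a clean direct-sum decomposition $V = W^{(1)}_{k_i} \oplus \cdots \oplus W^{(t)}_{k_i} \oplus W^{(t+1)}_{k_j}$ from any $t+1$ of the flags, making the reduction straightforward and producing the threshold $m \geq t+2$ (with $m\geq 4$ covering the small $t$ values). When $u > 0$, the sum $tk_i + k_j = n - u$ has codimension $u$ in $V$, and the reduction yields a slightly different Grassmannian. The cutoff $u = k_i/2$ between cases (2) and (3) arises because the quadratic inequality $m a(b-a) \geq b^2$ for the reduced Grassmannian passes from being satisfied at $m = t+2$ (when $u \leq k_i/2$) to requiring $m = t+3$ (when $u > k_i/2$). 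The hypotheses $t \geq 3$ in case (2) and $t \geq 2$ in case (3) are precisely those needed so that the derived sequences from Lemma \ref{lem-reduce} remain non-degenerate.

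The principal technical obstacle is the dimensional bookkeeping: at each reduction step one must verify the constraints $n' \leq n < n' + k_{m, r_m}$ of Lemma \ref{lem-reduce} and the identity $n = (m-1) k_r$ required by Proposition \ref{prop-m1}, and then carefully match the resulting Grassmannian dimensions to Popov's criterion. In edge cases where a single round of iterated reduction does not directly apply (for instance, when $n$ is only slightly larger than $(t+1)k_i + u$), we plan to iterate the procedure a second time, employ duality to swap the roles of $k_i$ and $k_j$ before reducing, or fall back on the trivially-sparse inequality $m \cdot \dim F(k_i, k_j; n) > n^2 - 1$ verified directly from the numerical data. A careful enumeration of these subcases, matched to the bounds in the proposition's three clauses, will yield the result.
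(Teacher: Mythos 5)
Your opening reduction (projecting to the two relevant factors) and the observation that the $t=1$, $u=0$ subcase follows from Corollary \ref{cor-three2step} match the paper. But the core of your plan does not work, and it is not what the paper does. The hypotheses of Lemma \ref{lem-reduce} require $\sum_{i=1}^{m-1} k_{i,r_i} \leq n$, i.e.\ here $(m-1)k_j \leq n$, and Proposition \ref{prop-m1} requires $n=(m-1)k_r$; in the situations covered by the proposition these fail badly. For example, for $F(1,n-t;n)^{t+2}$ with $n \gg 0$ one has $(m-1)k_j=(t+1)(n-t) \gg n$, and duality only swaps to $F(t,n-1;n)^{t+2}$ where the same obstruction persists, so no chain of applications of Lemma \ref{lem-reduce} and Proposition \ref{prop-m1} can get started, let alone produce an equivalence $F(k_i,k_j;n)^m \Leftrightarrow \Gr(a,b)^m$. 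Your fallback, the trivially-sparse inequality, is also false in general: $\dim F(1,n-t;n)^m$ grows only linearly in $n$, while $\dim \PGL(n)=n^2-1$, so these products are far from trivially sparse; indeed the whole point of the proposition is to exhibit sparsity that is not detected by dimension counts on the product itself. Finally, sparsity here cannot be an ``$\Leftrightarrow$'' statement obtained from the reduction lemmas, which preserve expected stabilizer dimension; one only needs (and only gets) a one-way transfer of sparsity.

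The missing idea is the paper's construction of an induced configuration inside the $2k_i$-dimensional space $W=\Span(U_1,U_2)$ spanned by two of the $k_i$-planes. Using the remaining flags one forms spans $S_p$ of $V_p$ with enough of the other $U_q$'s (this is where $m \geq t+2$ or $m\geq t+3$ is used) and intersects with $W$ to obtain two subspaces $U_3',U_4'$ of dimension $k_i$ when $u=0$, or three subspaces of dimension $k_i-u$ (resp.\ $2k_i-u$) when $0<u\leq k_i/2$ (resp.\ $k_i/2<u<k_i$). One then checks the construction is reversible, so a general point of the corresponding configuration space in $W$ arises, and concludes: when $u=0$ the induced configuration lies in $\Gr(k_i,2k_i)^4$ with $4k_i=2\cdot 2k_i$, which is sparse by Theorem \ref{thm-Grassmannian4}; when $u>0$ the induced configuration space $\Gr(k_i,2k_i)^2\times\Gr(k_i-u,2k_i)^3$ (or $\Gr(k_i,2k_i)^2\times\Gr(2k_i-u,2k_i)^3$) has dimension at least $\tfrac{17}{4}k_i^2 > \dim\PGL(2k_i)$, so the $\PGL(W)$-action on it is sparse — and this is exactly where the cutoff $u=k_i/2$ and the bounds $t\geq 3$, $t\geq 2$ enter, not through Popov's criterion for a self-product $\Gr(a,b)^m$. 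Sparsity of the induced configuration then obstructs a dense orbit upstairs. Without this construction (or some substitute producing invariants/moduli), your proposal does not prove the statement.
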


\begin{proof}
There is a $\PGL(n)$-equivariant surjective morphism $F(k_1, \dots, k_r; n)^m \to F(k_i, k_j; n)^m$. If the action on the latter is not dense, then the action on the former cannot be dense either. Hence, for the rest of the proof we will assume that $i=1, j=2$.  

Let $U_i \subset V_i$ for $1 \leq i \leq m$ be $m$ general partial flags of type $k_1, k_2$. 
Let $W$ be the span of $U_1$ and $U_2$. Observe that $W$ is a $(2k_1)$-dimensional linear space. 

\noindent {\bf Case 1: $u=0$.} Assume $u=0$ and  $m \geq \max(4, t+2)$.  In this case, let $S_3$ be the  span of $V_3$ and $U_i$ for $4 \leq i \leq t+2$. Let $S_4$ be the span of $V_4$, $U_i$ for $5 \leq i \leq t+2$ and $U_3$ if $t>1$. Let $U_i' = S_i \cap W$ for $i=3,4$. Observe that $U_3'$ and $U_4'$ are $k_1$-dimensional linear spaces in $W$.

 It is easy to see that we can get any two  general  $k_1$-dimensional linear spaces in $W$ in this way by choosing the flags $U_i \subset V_i$ appropriately. Namely, let $U_3'$ and $U_4'$ be general $k_1$-dimensional linear spaces in $W$.  If $t>1$, let $W'$ be a general $((t-2)k_1)$-dimensional subspace; otherwise let $W'=0$. Let $V_3$ be a general $k_2$-dimensional linear space intersecting the span of $W'$ and $U_4'$ in a $k_1$-dimensional linear space $U_3$. Similarly, let $V_4$ be a general $k_2$-dimensional linear space intersecting the span of $W'$ and $U_3'$ in a $k_1$-dimensional linear space $U_4$. Let $U_i$ for $i>4$ be general $k_1$-dimensional linear spaces in $W'$ and $V_i$ for $i > 4$ be general $k_2$-dimensional linear spaces containing $U_i$.  This reverses the construction. Since the action on $\Gr(k_1, 2k_1)^4$ is not dense, we conclude that  the action on $F(k_1, k_2; n)^m$ cannot be dense.

\noindent {\bf Case 2: $1 \leq u \leq \frac{k_1}{2}$.}  Assume $0< u \leq \frac{k_1}{2}$, $t \geq 3$ and and $m \geq t+2$. In this case, let $S_3$ be the  span of $V_3$ and $U_i$ for $4 \leq i \leq t+2$. Let $S_4$ be the span of $V_4$, $U_3$ and $U_i$ for $5 \leq i \leq t+2$. Finally, let $S_5$ be the span of $V_5$, $U_3, U_4$ and $U_i$ for $6 \leq i \leq t+2$. Let $U_i' = S_i \cap W$ for $3 \leq i \leq 5$. Observe that $\dim(U_i')= k_1-u$. We can get any three general $k_1$-dimensional linear spaces in $W$ by choosing the flags $U_i \subset V_i$ appropriately. Namely, for $3 \leq i \leq 5$, fix a general  $(n-s+(t-1)k_1)$-dimensional linear space $T_i$ intersecting $W$ in $U_i'$. Let $U_i$ for $3 \leq i \leq t$ be general linear spaces of dimension $k_1$ contained in $\cap_{i=3}^5 T_i$. For $3 \leq i \leq 5$, let $V_i$ be general linear spaces containing $U_i$ and contained in $T_i$. For $6 \leq i \leq t$. let $V_i$ be general linear spaces containing $U_i$. With these choices, we recover $U_3', U_4', U_5'$.

We have that 
$$\dim(\Gr(k_1, 2k_1)^2 \times \Gr(k_1 -u, 2k_1)^3) = 2k_1^2 + 3k_1^2 - 3u^2 \geq \frac{17}{4} k_1^2 > 4k_1^2.$$
Hence, the action on $F(k_1, k_2; n)^m$ cannot be dense.

\noindent {\bf Case 3: $\frac{k_1}{2} < u \leq k_1$.} Assume $\frac{k_1}{2} < u \leq k_1$, $t \geq 2$ and $m \geq  t+3$.  In this case, let $S_3$ be the  span of $V_3$ and $U_i$ for $4 \leq i \leq t+3$. Let $S_4$ be the span of $V_4$, $U_3$ and $U_i$ for $5 \leq i \leq t+3$. Finally, let $S_5$ be the span of $V_5$, $U_3, U_4$ and $U_i$ for $6 \leq i \leq t+3$. Let $U_i' = S_i \cap W$ for $3 \leq i \leq 5$. Observe that $\dim(U_i')= 2k_1-u$. As in the previous cases, we can get any three general $(2k_1-u)$-dimensional linear spaces in $W$ by choosing the flags $U_i \subset V_i$ appropriately. We have that 
$$\dim(\Gr(k_1, 2k_1)^2 \times \Gr(2k_1 -u, 2k_1)^3) = 2k_1^2 + 3u(2k_1-u) \geq \frac{17}{4} k_1^2 > 4k_1^2.$$
Hence, the action on $F(k_1, k_2; n)^m$ cannot be dense.

\end{proof}

\begin{corollary}
The $\PGL(n)$ action on  $F(k, n-2k; n)^m$ is dense if and only if $m \leq 3$.
\end{corollary}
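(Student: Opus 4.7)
The plan is to combine Proposition \ref{prop-sparse} with Theorem \ref{thm-three2step} and the standard finiteness of orbits on a single or double product. For the flag variety to be defined we implicitly have $0 < k < n-2k$, i.e. $n > 3k$.

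First, I would dispose of the easy direction $m \leq 2$: the $\PGL(n)$ action on $F(k_1,k_2;n)^m$ has finitely many orbits for $m \leq 2$ by Magyar--Weyman--Zelevinsky, hence the open orbit is dense. Next, for $m=3$, apply Theorem \ref{thm-three2step} to $F(k_1,k_2;n) = F(k, n-2k; n)$: the action is dense iff $k_1+k_2 \neq n$. Here $k_1+k_2 = k + (n-2k) = n-k$, and since $k \geq 1$ we have $n-k \neq n$, so the action is dense.

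For the sparsity direction $m \geq 4$, the key observation is that with $k_i = k$ and $k_j = n-2k$, we may write $k_j = n - s$ where $s = 2k = k_i \cdot t + u$ with $t=2$ and $u=0$. This puts us precisely in case (1) of Proposition \ref{prop-sparse}, which requires $m \geq \max(t+2,4) = 4$. The proposition therefore directly asserts that the $\PGL(n)$ action on $F(k, n-2k; n)^m$ is sparse for all $m \geq 4$, completing this direction.

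I do not anticipate a serious obstacle: the statement is an immediate synthesis of an already-proven sparsity result (Proposition \ref{prop-sparse}) and the complete classification for triple products of two-step flag varieties (Theorem \ref{thm-three2step}). The only check worth stating explicitly is that $k_1+k_2 = n-k$ is strictly less than $n$, so the triple-product case does fall into the dense regime of Theorem \ref{thm-three2step}.
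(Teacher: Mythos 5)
Your proposal is correct and follows the paper's own argument: density for $m\leq 3$ via Theorem \ref{thm-three2step} (with the check $k_1+k_2=n-k\neq n$) and sparsity for $m\geq 4$ via case (1) of Proposition \ref{prop-sparse} with $t=2$, $u=0$. The only cosmetic difference is that you treat $m\leq 2$ separately through finiteness of orbits, whereas the paper subsumes it under the $m\leq 3$ density statement.
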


\begin{proof}
By Theorem \ref{thm-three2step}, $F(k,n-2k; n)^m$ is dense if $m \leq 3$. By Proposition \ref{prop-sparse}, $F(k, n-2k; n)^m$ is sparse if $m \geq 4$.
\end{proof}

\begin{corollary}
Assume $m \geq 4$ and $2t <n$. If the $\PGL(n)$ action on $F(1, n-t ; n)^m$ is dense, then $m \leq t+1$. If $t \geq 3$ and $n> 2t$,  then the $\PGL(n)$ action on  $F(1, n-t; n)^{t+1}$ is dense if and only if $n \geq t(t+1)$.
\end{corollary}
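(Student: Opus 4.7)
The plan is to treat the two assertions separately, reducing each to an explicit calculation. For the first assertion, I would apply Proposition~\ref{prop-sparse}(1) with $(k_i, k_j) = (1, n-t)$. In the notation of that proposition, $s = t = 1 \cdot t + 0$, so $u = 0$, giving sparsity whenever $m \geq \max(t+2, 4)$. Since $m \geq 4$ by assumption, the contrapositive yields $m \leq t+1$ in the dense case.

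For the ``only if'' direction of the second assertion, I would establish trivial sparsity by a direct dimension count. Using $\dim F(1, n-t; n) = (t+1)(n-t) - 1$, the condition $\dim \PGL(n) < \dim F(1, n-t; n)^{t+1}$ becomes
\[
p(n) := n^2 - (t+1)^2 n + t(t+1)^2 + t < 0.
\]
Direct evaluations give $p(2t+1) = -t(t+1)(t-2) < 0$ and $p(t(t+1) - 1) = -t^2 + t + 2 < 0$ for $t \geq 3$, while $p(t(t+1)) = t > 0$. Since $p$ is a monic quadratic, its larger root lies in the open interval $(t(t+1)-1, t(t+1))$ and its smaller root is at most $2t+1$, so $p(n) < 0$ on the integer interval $[2t+1, t(t+1) - 1]$, establishing sparsity for $2t < n < t(t+1)$.

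For the ``if'' direction, I would write $n = t(t+1) + s$ with $s \geq 0$ and exhibit a configuration whose stabilizer attains the expected dimension, invoking Lemma~\ref{lem-basiclem}. Fix a basis $e_1, \dots, e_n$ of $V$ and partition $\{1, \dots, n\}$ into blocks $B_i = \{(i-1)t+1, \dots, it\}$ for $i = 1, \dots, t+1$ together with a leftover block $B_0$ of size $s$. Set $\Lambda_i := \Span(e_j^* : j \in B_i) \subset V^*$ and pick a generic $v_i \in \Lambda_i^\perp$. Requiring $g \in \GL(n)$ to preserve each $\Lambda_i$ on the dual side forces the rows of $g$ indexed by $B_i$ to be supported in the $B_i$-columns (with no constraint on the last $s$ rows), producing diagonal blocks $g_1, \dots, g_{t+1}$ and a leftover strip $(\tilde{h}_1, \dots, \tilde{h}_{t+1}, h_0)$. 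Imposing $gv_i = c_i v_i$ splits into (a) eigenvector conditions $g_j (v_i)_j = c_i (v_i)_j$ for $i \neq j$, which uniquely determine each $g_j$ from the scalars $c_1, \dots, c_{t+1}$ since the vectors $(v_i)_j$ for $i \neq j$ form a generic basis of $\CC^t$; and (b) a linear system of $s(t+1)$ scalar equations in the leftover strip and the $c_i$'s.

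The crux is verifying that this linear system has full rank for a generic configuration. I would prove it by restricting to the $\tilde{h}_j$ variables and dualizing: any linear functional $(\lambda_i) \in (\CC^s)^{t+1}$ annihilating the image forces $\sum_{i \neq j} (v_i)_j \lambda_i^T = 0 \in \Mat(t, s)$ for every $j$, and the generic linear independence of $\{(v_i)_j : i \neq j\} \subset \CC^t$ then forces $\lambda_i = 0$ for all $i \neq j$; ranging over $j$ gives $\lambda = 0$. A parameter count then yields a $\GL(n)$-stabilizer of dimension $s^2 + s(t^2-1) + (t+1)$, hence a $\PGL(n)$-stabilizer of dimension $s^2 + s(t^2-1) + t$, matching $\dim \PGL(n) - \dim F(1, n-t; n)^{t+1}$. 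Lemma~\ref{lem-basiclem} then delivers density. I expect the rank verification to be the only genuinely delicate step; the remaining bookkeeping is routine.
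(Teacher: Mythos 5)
Your proposal is correct and follows essentially the same route as the paper: sparsity for $m \geq t+2$ via Proposition~\ref{prop-sparse}, trivial sparsity for $2t < n < t(t+1)$ by evaluating the same quadratic at $n = 2t+1$ and $n = t(t+1)-1$, and density via an explicit configuration whose stabilizer attains the expected dimension $(n-t-1)(n-t(t+1))+t$, invoking Lemma~\ref{lem-basiclem}. The only difference is cosmetic: the paper first applies Lemma~\ref{lem-duality} to pass to $F(t, n-1; n)^{t+1}$ and solves the stabilizer equations of a fully explicit configuration, whereas you stay on $F(1, n-t; n)^{t+1}$ with the dual configuration (coordinate $(n-t)$-planes and generic lines) and verify the dimension count by your rank argument, which checks out.
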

 
 \begin{proof}
 By Proposition \ref{prop-sparse}, the $\PGL(n)$ action on  $F(1, n-t ; n)^m$ is not dense if $m \geq t+2$. Now suppose $m=t+1$. We have
 $$\dim(\PGL(n)) - \dim(F(1, n-t; n)^{t+1}) = n^2 -1 - (t+1)((n-t-1) + t (n-t)).$$
 When $n=2t+1$, this quantity is $-t^3+t^2+2t$, which is negative for $t \geq 3$.
When $n= t(t+1)-1$, this quantity is $(t+1)(2-t)$, which is negative for $t \geq 3$. We conclude that, under our assumptions,  the $\PGL(n)$ action on $F(1, n-t; n)^{t+1}$ cannot be dense unless $n \geq t(t+1)$. 

To prove the converse suppose that $n \geq t(t+1)$. By Lemma \ref{lem-duality}, it suffices to prove the density of the action on $F(t, n-1; n)^{t+1}$. By Lemma \ref{lem-basiclem}, it suffices to exhibit a set of $t+1$ partial flags $U_i \subset V_i$ for $1 \leq i \leq t+1$ with  $\dim(U_i)=t$ and $\dim(V_i) = n-1$ such that the stabilizer of the configuration has dimension $n^2 -1 - (t+1)(t(n-1-t) + n-1).$  Choose a basis $e_1, \dots, e_n$ for the underlying vector space. Since $n\geq t(t+1)$, for each $1 \leq i \leq t+1$, we can let the subspace $U_i$  be the coordinate subspace spanned by the basis elements $e_s$ with $(i-1)t<s \leq it$. We then let the $(n-1)$-dimensional flag element $V_i$ be defined by the linear relation $$\sum_{m=0}^{i-2}x_{mt+i-1} + \sum_{m=i}^{t} x_{mt + i}=0.$$ Matrices $M=(a_{u,v})$ stabilizing this configuration satisfy the following conditions:
\begin{enumerate}
\item $a_{u,v}=0$ if $u\leq t(t+1)$ and $v\not=u$
\item Let $0<m\leq t+1$. Then $a_{mt+1, mt+1} = a_{t+1, t+1}$. If $1 < s < m$, then $a_{mt+s, mt+s}= a_{s-1,s-1}$. If $m \leq s < t$, then $a_{mt+s, mt+s}= a_{s,s}$.
\item If $v > t(t+1)$, then for each $1 \leq i \leq t+1$, we have $\sum_{m=0}^{i-1} a_{mt+i-1,v} + \sum_{m=i}^t a_{mt+i,v} =0.$
\end{enumerate}
As each of the equations in item (3) involve distinct variables, they are independent. Hence, it is easy to see that this subgroup of $\PGL(n)$ has dimension $t + (n-t-1)(n-t(t+1))$. Observe that 
\begin{align*}
\dim(\PGL(n)) - (t+1) \dim(F(t, n-1; n)) &= n^2-1 - (t+1)(t(n-1-t) + n-1) \\ &= n^2 + t (t+1)^2 - nt(t+1)- n(t+1) + t \\ &= (n-t-1)(n-t(t+1)) +t  
\end{align*}
This concludes the proof. 
 \end{proof}

 \begin{proposition}\label{prop-obstruction}
Assume that there exist three indices $0<i_1< i_2 < i_3 \leq r$ such that $$2 k_{i_1} + k_{i_2} + k_{i_3}=2n.$$ Then the $\PGL(n)$ action on $F(k_1, \dots, k_r; n)^3$ is not dense.
\end{proposition}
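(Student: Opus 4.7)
The plan is to reduce to the three-step case and exploit the four-Grassmannian sparsity result. First, I will use the $\PGL(n)$-equivariant surjection
\[
F(k_1, \dots, k_r; n)^3 \twoheadrightarrow F(k_{i_1}, k_{i_2}, k_{i_3}; n)^3,
\]
obtained by forgetting all flag steps except those indexed by $i_1, i_2, i_3$, to reduce to the case $r = 3$. Writing $a := k_{i_1} < b := k_{i_2} < c := k_{i_3}$, the hypothesis becomes $2a + b + c = 2n$, and the task is to show that $F(a, b, c; n)^3$ carries no dense $\PGL(n)$-orbit.

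Next, I would consider the $\PGL(n)$-equivariant morphism
\[
\Phi : F(a, b, c; n)^3 \longrightarrow \Gr(a, n) \times \Gr(a, n) \times \Gr(b, n) \times \Gr(c, n), \quad (U_{i, \bullet})_{i=1,2,3} \longmapsto (U_{1, a}, U_{2, a}, U_{3, b}, U_{3, c}).
\]
The four dimensions on the right sum to $a + a + b + c = 2n$, so by Theorem \ref{thm-Grassmannian4} the target is sparse under $\PGL(n)$, and hence carries a non-constant $\PGL(n)$-invariant rational function $\rho$. The pullback $\Phi^*\rho$ is a $\PGL(n)$-invariant rational function on $F(a, b, c; n)^3$, and if it is non-constant then the action cannot have a dense orbit.

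The principal obstacle is that $\Phi$ is \emph{not} surjective: its image is the proper closed subvariety $\Gr(a, n)^2 \times F(b, c; n)$, cut out by the nesting constraint $U_{3, b} \subset U_{3, c}$. One must verify that $\rho$ does not happen to become constant on this nesting locus. I would establish non-constancy by an explicit tangent-space computation at a generic point $(A_1, A_2, U_b \subset U_c)$, exploiting the canonical decomposition $U_c = K_1 \oplus K_2 \oplus U_b$ (where $K_i := A_i \cap U_c$ has dimension $(c - b)/2$) to normalize the configuration via $\PGL(n)$ and to exhibit a tangent direction, obtained by perturbing $U_b$ inside $U_c$, along which the cross-ratio-type invariant $\rho$ genuinely varies. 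As a backup, bypassing $\rho$ entirely, one can compute directly the dimension of the $\PGL(n)$-stabilizer of a generic point of $F(a, b, c; n)^3$ using the same decomposition, exhibiting residual continuous parameters in the stabilizer that exceed the expected value $n^2 - 1 - 3 \dim F(a, b, c; n)$.
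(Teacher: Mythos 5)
Your opening reduction (forgetting flag steps to get to $F(a,b,c;n)^3$ with $2a+b+c=2n$) matches the paper, but the core of your argument rests on a claim that is not just unverified — it is false, so the obstacle you flag cannot be overcome. The image of your map $\Phi$ is $\Gr(a,n)^2\times F(b,c;n)$, and this variety can itself carry a dense $\PGL(n)$-orbit, in which case every $\PGL(n)$-invariant rational function is constant on it wherever defined, so $\Phi^*\rho$ is constant and no tangent-space computation can exhibit a direction along which it varies. This already happens in the smallest admissible instance $(k_1,k_2,k_3;n)=(2,3,5;6)$: for the point of $\Gr(2,6)^2\times F(3,5;6)$ given by $A_1=\langle f_2,f_4\rangle$, $A_2=\langle f_3,\,f_1+f_2+f_3+f_4\rangle$, $U_3=\langle f_1,f_5,f_6\rangle$, $U_5=\langle f_1,f_2,f_3,f_5,f_6\rangle$, the stabilizer in $\GL(6)$ consists exactly of the maps with $gf_1=af_1$, $gf_2=bf_2$, $gf_3=cf_3$, $gf_4=(a-b)f_2+af_4$, and $gf_5,gf_6\in U_3$ arbitrary: a $9$-dimensional group, i.e.\ dimension $8=35-27$ in $\PGL(6)$, so by Lemma \ref{lem-basiclem} this orbit is dense in $\Gr(2,6)^2\times F(3,5;6)$. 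Hence for this $(k_1,k_2,k_3;n)$ your route cannot detect sparsity at all: projecting to $(U_{1,a},U_{2,a},U_{3,b}\subset U_{3,c})$ destroys the obstruction, and sparsity of the ambient four-Grassmannian product is irrelevant because $\Phi$ is not dominant.

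Your backup plan (a direct stabilizer computation on $F(a,b,c;n)^3$) is only a placeholder: to prove sparsity you must show that \emph{no} point has stabilizer of the expected dimension, equivalently control a genuinely generic triple of three-step flags via a normal form; exhibiting extra parameters at some special configuration proves nothing, since stabilizer dimension only jumps up on special loci. The paper circumvents exactly this by a different, span-and-intersect construction: inside the $k_1$-dimensional space $W_{1,k_1}$ it forms the four subspaces $W_{2,k_3}\cap W_{1,k_1}$ and $W_{3,k_3}\cap W_{1,k_1}$ (dimension $k_1+k_3-n$) together with $\Span(W_{2,k_1},W_{3,k_2})\cap W_{1,k_1}$ and $\Span(W_{3,k_1},W_{2,k_2})\cap W_{1,k_1}$ (dimension $2k_1+k_2-n$); these dimensions sum to $2k_1$, the construction is shown to reach a general such quadruple, and Theorem \ref{thm-Grassmannian4} applied inside $W_{1,k_1}$ gives sparsity, which then pulls back. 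Any repair of your argument would need a derived configuration of this kind — one whose target is genuinely sparse and onto which the construction is dominant — rather than the forgetful map $\Phi$.
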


\begin{proof}
There exists a $\PGL(n)$-equivariant  surjective morphism $$\phi: F(k_1, \dots, k_r; n)^3 \to F(k_{i_1}, k_{i_2}, k_{i_3}; n)^3.$$  If the $\PGL(n)$ action on  $F(k_{i_1}, k_{i_2}, k_{i_3}; n)^3$ is  not dense, then the action on  $F(k_1, \dots, k_r; n)^3$ cannot be dense either. Hence, it suffices to prove the proposition for $F(k_1, k_2, k_3; n)^3$ when $2k_1 + k_2 + k_3 = 2n$. 

For $1 \leq i \leq 3$, let $W_{i, k_i}$ denote the elements of three general  partial flags.  For $i=2,3$, let $U_i = W_{i, k_3} \cap W_{1,k_1}$.  Observe that $$\dim(U_i) = k_1+k_3 - n,$$ which is positive since $k_1 + k_3 > k_1 + k_2$ and $2k_1 + k_2 + k_3 = 2n$ by assumption. Let $$V_2 = \Span(W_{2, k_1}, W_{3,k_2}) \cap W_{1,k_1}\quad \mbox{and} \quad V_3= \Span(W_{3, k_1}, W_{2,k_2}) \cap W_{1,k_1}.$$ Observe that $$\dim(V_i) = 2k_1 + k_2 -n,$$ which is also positive since $n> k_3$ and $2k_1 + k_2 + k_3 = 2n$. 
We thus obtain a collection of 4 linear spaces $$(U_2, U_3, V_2, V_3) \in  \Gr(k_1 + k_3 -n, k_1)^2 \times \Gr(2k_1 + k_2 -n, k_1)^2.$$
It is easy to see that a general set of such 4-tuples arise via this construction. Namely, given general $U_2, U_3$ and $V_2, V_3$ in $W_{1,k_1}$, pick two general $k_3$-dimensional linear spaces $W_{2, k_3}$ containing $U_2$ and $W_{3, k_3}$ containing $U_3$. Then pick two $(k_1 + k_2)$-dimensional linear spaces $T_2$ containing $V_2$ and $T_3$ containing $V_3$. The intersection of $T_2, T_3$ and $W_{i,k_3}$ has dimension $k_2$, hence we can choose a general $k_1$-dimensional subspace $W_{i, k_1}$ in this intersection. Similarly, the intersection of $T_2$ (respectively, $T_3$) with $W_{3, k_3}$ (respectively, $W_{2,k_3}$) has dimension $k_1 + k_2+ k_3 - n>k_2$. Hence, we can choose a general $k_2$-dimensional linear space $W_{3, k_2}$ containing $W_{3, k_1}$ (respectively, $W_{2, k_2}$ containing $W_{2, k_1}$) in this intersection. It is clear that applying the construction to these partial flags yields back $U_2, U_3, V_2$ and $V_3$. We conclude that the action on $F(k_1, k_2, k_3;n)^3$ is sparse if  the action on $\Gr(k_1 + k_3 -n, k_1)^2 \times \Gr(2k_1 + k_2 -n, k_1)^2$ is sparse. Since $$2(k_1+k_3 - n) + 2(2k_1+k_2 -n)= 2k_1,$$ we conclude that the latter is sparse by Theorem \ref{thm-Grassmannian4}. This concludes the proof of the proposition.
\end{proof}

\begin{corollary}
Let $a < b$ be two positive integers such that $a+b$ is even. Then $F(a,b, n- \frac{a+b}{2}; n)^3$ is not dense.
\end{corollary}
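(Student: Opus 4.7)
The plan is to deduce the corollary from Proposition \ref{prop-obstruction}, but with a twist: a direct application to $F(a,b,n-\tfrac{a+b}{2};n)^3$ does not work. Indeed, setting $k_1 = a$, $k_2 = b$, $k_3 = n - \tfrac{a+b}{2}$, the only available triple of indices is $(i_1,i_2,i_3)=(1,2,3)$, and
\[
2k_1 + k_2 + k_3 = 2a + b + n - \tfrac{a+b}{2} = \tfrac{3a+b}{2} + n,
\]
which equals $2n$ only when $3a + b = 2n$. So I would first dualize using Lemma \ref{lem-duality}, which places the coefficient $2$ on what was the largest index rather than the smallest.

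Concretely, Lemma \ref{lem-duality} gives
\[
F\!\left(a,\, b,\, n-\tfrac{a+b}{2};\, n\right)^3 \;\Leftrightarrow\; F\!\left(\tfrac{a+b}{2},\, n-b,\, n-a;\, n\right)^3.
\]
Let $k_1' = \tfrac{a+b}{2}$, $k_2' = n-b$, $k_3' = n-a$. Implicit in the hypothesis that the original flag variety is non-degenerate is $a < b < n - \tfrac{a+b}{2}$, which rearranges to $a<b$ and $a+3b<2n$; these in turn give $k_1' < k_2' < k_3'$, so the dual is a valid partial flag variety. Now the key numerical check: with $(i_1,i_2,i_3)=(1,2,3)$,
\[
2k_1' + k_2' + k_3' = (a+b) + (n-b) + (n-a) = 2n,
\]
exactly matching the hypothesis of Proposition \ref{prop-obstruction}.

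Applying Proposition \ref{prop-obstruction} to $F(\tfrac{a+b}{2},n-b,n-a;n)^3$ yields that this action is sparse, and by the equivalence above, so is the action on $F(a,b,n-\tfrac{a+b}{2};n)^3$. There is no serious obstacle here: the only conceptual point is recognizing that the numerical hypothesis $2k_{i_1}+k_{i_2}+k_{i_3}=2n$ of the proposition is not symmetric in the three indices (the coefficient $2$ is attached to the smallest), so that duality is the natural tool to turn the equation $k_1 + k_2 + 2k_3 = 2n$ visible in the original configuration into the form required by the proposition.
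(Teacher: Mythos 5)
Your proposal is correct and follows essentially the same route as the paper: dualize via Lemma \ref{lem-duality} to obtain $F\left(\frac{a+b}{2}, n-b, n-a; n\right)^3$ and then check $2\cdot\frac{a+b}{2} + (n-b) + (n-a) = 2n$ so that Proposition \ref{prop-obstruction} applies. Your additional remarks on why the direct application fails and why the dual sequence is strictly increasing are accurate but not needed beyond the paper's two-line argument.
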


\begin{proof}
By Lemma \ref{lem-duality}, $$F\left(a,b, n- \frac{a+b}{2}; n\right)^3 \Leftrightarrow F\left(\frac{a+b}{2}, n-b, n-a, ; n\right)^3.$$ The latter is not dense by Proposition \ref{prop-obstruction}.
\end{proof}

\bibliographystyle{plain}

\end{document}